\theoremstyle{plain}
\newtheorem{thm}{Theorem}[section]
\newtheorem{lem}[thm]{Lemma}
\newtheorem{prop}[thm]{Proposition}
\theoremstyle{definition}
\newtheorem{defn}[thm]{Definition}
\theoremstyle{remark}
\font\smallsc=cmcsc10
\font\smallsl=cmsl10
\def\p{\mathbb{P}}
\def\z{\mathbb{Z}}
\def\L{\mathcal{L}}
\def\X{\mathcal{X}}
\def\O{\mathcal{O}}
\def\ox{\otimes}
\def\wt{\widetilde}
\def\:{\colon}
\def\M{\mathcal{M}}
\def\I{\mathcal{I}}
\def\N{\mathcal{N}}
\def\V{\mathcal{V}}
\def\ol{\overline}
\def\P{\mathcal{P}}
\def\W{\mathcal{W}}
\def\rk{{\rm rk}}
\def\Ker{{\rm Ker}}
\def\Im{{\rm Im}}
\def\g{{\mathfrak g}}
\def\IP{{\mathbb P}}
\def\div{\mathrm{div}}
\def\lra{\longrightarrow}
\title{Level-$\delta$ limit linear series}
\author{
Eduardo Esteves\footnote{Supported by CNPq, Processos~304259/2010-1
  and 304623/2015-6, 
by FAPERJ, Processo E-26/102.979/2011, and by
CAPES, Processo 4155-13-0.}, 
Antonio Nigro\footnote{Supported by CNPq, Processo 152063/2010-2.} and 
Pedro Rizzo\footnote{Supported by a doctor scholarship from
  CAPES-PROEX.}
}
\date{}
\begin{document}

\maketitle

\begin{abstract}
We introduce the notion of \emph{level-$\delta$ limit linear series},
which describe limits of linear series along families of smooth
curves degenerating to a singular curve $X$. We treat here only the
simplest case where $X$ is the union of two smooth components meeting
transversely at a point $P$. The integer $\delta$ stands for the
singularity degree of the total space of the degeneration at $P$. If
the total space is regular, we get level-1 limit linear series, which
are precisely those introduced by Osserman \cite{Oss1}. We construct a
projective moduli space $G^r_{d,\delta}(X)$ parameterizing
level-$\delta$ limit linear series of rank $r$ and degree $d$ on $X$,
and show that it is a new compactification, for each $\delta$, of the moduli space of
Osserman exact limit linear series, an open subscheme 
$G^{r,*}_{d,1}(X)$ of the space $G^r_{d,1}(X)$ already constructed by
Osserman. Finally, we generalize \cite{E-Oss} by associating 
to each exact level-$\delta$ limit linear series $\mathfrak
g$ on $X$ a closed subscheme $\p(\mathfrak g)\subseteq X^{(d)}$ of the $d$th
symmetric product of $X$, and showing that $\p(\mathfrak g)$ is the limit of
the spaces of divisors associated to linear series on smooth curves
degenerating to $\mathfrak g$ on $X$, if such degenerations exist. In
particular, we describe completely limits of divisors along
degenerations to such a
curve $X$.
\end{abstract}

\section{Introduction.}
The theory of linear series, meaning spaces of sections of line
bundles, 
has a long history and plays an important 
role in Algebraic Geometry. The special case of curves is particularly 
rich and has been investigated from several directions. Following on 
their proof of the Brill--Noether
Theorem in the 1980's, Eisenbud and Harris \cite{EH1} introduced the
notion of \emph{limit linear series}, providing a powerful framework
to study degenerations of linear series on families of smooth curves
degenerating to curves of compact type. 
As a consequence of their general theory, they were able to simplify
the proof of the Brill-Noether Theorem and to prove a number of other 
results about curves; see the introduction to \cite{EH1}.

The success achieved by Eisenbud and Harris in the 80's, and further
applications of the theory of limit linear series, particularly in
computing divisors in the moduli space of stable curves, has motivated
further study into the foundational aspects of the theory. For
starters it was observed that most of the applications of the theory
are obtained by considering an open subscheme of 
the projective moduli space $G^r_d(X)$, 
parameterizing limit linear 
series of rank $r$ and degree $d$ 
on a curve of compact type $X$, that whose points correspond to
special limit linear series called \emph{refined}. Not always limit
linear series on a curve $X$ are limits of linear series on smooth
curves degenerating to $X$. In any case, refined limit $g^1_d$ are always limits of 
linear series, and for $g^r_d$ with $r\geq 2$ a 
similar result, called Regeneration Theorem, holds; see \cite{EH1},
Thm.~3.4, p.~360. This is Eisenbud's and Harris' main theorem, the one often
used in applications. Unfortunately, there are points on $G^r_d(X)$
that are not refined, but carry important information, as they correspond
to limit linear series that are actually limits of linear series.

A fundamental breakthrough was made by Osserman \cite{Oss1} in the
2000's, 
when he gave a new definition of limit linear 
series on a curve $X$ and constructed a projective moduli space
$G^{r,\text{Oss}}_d(X)$ parameterizing those new objects. An Osserman
limit linear series carries more information than a usual one, and
thus there is a forgetful map $G^{r,\text{Oss}}_d(X)\to G^r_d(X)$. The
map is an isomorphism over the refined locus of $G^r_d(X)$, and thus
$G^{r,\text{Oss}}_d(X)$ can be viewed as a different natural
compactification of this locus; see \cite{Oss1}, Section~6, p.~1183. 
Among the Osserman limit linear series there are those called
\emph{exact}, which form an open subscheme of
$G^{r,\text{Oss}}_d(X)$. Exact limit linear series are more amenable
to work. And they are dense among all limit linear series, by
\cite{Liu}, Cor.~1.4, p.~4034, at least if $X$ is general. 
Furthermore, all limits of linear series along families
of smooth curves degenerating to $X$ are exact, \emph{as long as} the
total space of the family is regular; see \cite{E-Oss}, Section~5,
p.~90. The space $G^{r,\text{Oss}}_d(X)$  has a defect similar to that
of $G^r_d(X)$ tough:
there are non-exact limit linear series that are limits of linear
series, along a nonregular smoothing of $X$.

Exact limit linear series are also special in the following sense: A
linear series $\mathfrak g$ of rank $r$ and degree $d$ on a smooth curve $C$
corresponds to a subscheme $\p(\mathfrak{g})$ of the symmetric product
$C^{(d)}$ of $d$ copies of $C$, whose points parameterize the
divisors of zeros of sections of $\mathfrak g$. This correspondence is
fundamental in the theory of curves. Given a family of linear series $\mathfrak g$ of
degree $d$ on smooth curves degenerating to a singular curve $X$, we
may ask what the limit of $\p(\mathfrak{g})$ in $X^{(d)}$ is. In \cite{E-Oss} Osserman and the 
first author considered the corresponding subscheme $\p(\mathfrak{g})$
of $X^{(d)}$ associated to an Osserman limit linear series $\mathfrak
g$ on $X$. It is defined as for smooth curves but, as certain 
sections of $\mathfrak g$ may vanish on a whole component of $X$, the
subscheme $\p(\mathfrak{g})$ is actually the closure of the locus of
divisors of zeros of the other sections. The two showed then that, if
$\mathfrak g$ is exact, then $\p(\mathfrak{g})$ has the expected
Hilbert polynomial, as if $\mathfrak g$ were a limit, and in this case
$\p(\mathfrak{g})$ is actually the limit of the schemes corresponding
to the linear series degenerating to $\mathfrak g$; see \cite{E-Oss},
Thms.~4.3 and 5.2. The converse is
shown here, our Theorem \ref{thm:1}.

Our goal in the present paper is to study limits of linear series
along nonregular smoothings of a given $X$, that is, along families of
smooth curves degenerating to $X$ whose total space is not regular. In
principle, as $X$ is nodal, one could replace the family by its
semistable reduction, thus replacing $X$ by a curve $\widetilde X$
obtained from $X$ by replacing the nodes by chains of rational
curves. However, dealing with $\widetilde X$ instead of $X$ is substantially
more difficult in the approach by Osserman. So much difficult that,
though Eisenbud and Harris developed their theory for curves of
compact type, only very recently (\cite{Oss1405} and \cite{Oss1406})
has Osserman extended his theory to curves $X$ that are not simply unions of
two smooth components meeting transversely at a single
point $P$. That is to say: From the above third paragraph of this
introduction on, and throughout the whole article, $X$ stands actually for such a
simple curve!

So we take a different approach: We introduce what we call
{\em level-$\delta$ limit linear series} on $X$; see Definition~\ref{def:1}. As in
Osserman's work, there are special level-$\delta$ limit linear series,
also called {\em exact}. And we show that certain exact level-$\delta$ limit linear
series arise as limits of linear series along smoothings of $X$ whose
total space has singularity degree $\delta$ at $P$; see the discussion
before Definition~\ref{def:1}. Level-1 limit linear series are simply Osserman
limit linear series. Following Osserman, we construct a projective
moduli space $G^r_{d,\delta}(X)$ for level-$\delta$ limit linear series
in Proposition~\ref{p1}. 

Level-$\delta$ limit linear series carry more information than
Osserman limit linear series. There is in fact a forgetful map 
$G^r_{d,\delta}(X)\to G^{r,\text{Oss}}_d(X)$. More generally, there are
forgetful maps $\rho_{\delta',\delta}\colon G^r_{d,\delta'}(X)\to
G^r_{d,\delta}(X)$ as long as $\delta|\delta'$. In our first main result, Theorem~\ref{p2},
we show that $\rho_{\delta',\delta} $ is surjective and describe its
fibers. As a consequence, we show in
Proposition~\ref{Cor1} that $\rho_{\delta',\delta}$ is an isomorphism
over the open subscheme $G^{r,*}_{d,\delta}(X)$ of $G^r_{d,\delta}(X)$
parameterizing exact limit linear series. 
Also, $\rho_{\delta',\delta}^{-1}(G^{r,*}_{d,\delta}(X))\subseteq
G^{r,*}_{d,\delta'}(X)$ and
$\rho_{\delta',\delta}(G^{r,*}_{d,\delta'}(X))=G^r_{d,\delta}(X)$ if $\delta'>\delta$.
It turns out that, for each
$\delta$, we may view $G^r_{d,\delta}(X)$ as a compactification of the
moduli space of Osserman {\em exact} limit linear series.

Finally, following \cite{E-Oss}, we associate to each level-$\delta$
limit linear series $\mathfrak g$ a subscheme 
$\mathbb P(\mathfrak g)$ of $X^{(d)}$. As in the level-1 case, we show
that if $\mathfrak g$ is exact, then $\p(\mathfrak{g})$ has the expected
Hilbert polynomial, as if $\mathfrak g$ were a limit, and in this case
$\p(\mathfrak{g})$ is actually the limit of the schemes corresponding
to the linear series degenerating to $\mathfrak g$. This is contained
in our last
two main results, Theorems~\ref{pv} and \ref{thm:1}, where the
converse is proved. The key to showing the converse is to show that if 
$\mathfrak g=\rho_{\delta',\delta}(\mathfrak g')$, then $\p(\mathfrak
g)\subseteq\p(\mathfrak g')$. Furthermore, equality holds if $\mathfrak g$ is exact
and does not hold if $\mathfrak g$ is not exact but $\mathfrak g'$ is.

In a forthcoming article \cite{ENR2}, we will give yet another notion
of limit linear series on $X$, and construct a moduli space
which will be a sort of glueing of the exact loci
$G_{d,\delta}^{r,*}(X)$ for all $\delta$, modulo a certain equivalence
relation. The remarkable fact is that this new moduli space is
projective, thus giving rise to a compactification of the locus of
Osserman exact limit linear series, and thus of Eisenbud and Harris
refined limit linear series by exact limit linear series, precisely
those which have good properties, as for instance those found in the
present article.

Part of this work appeared in the third author's doctor thesis at IMPA
\cite{Rizzo}. We would like to thank Margarida Melo, Brian Osserman,
Marco Pacini and Filippo Viviani for helpful discussions on the subject.

\section{Twists}
\label{Tw}

Let $X$ be a projective curve defined over an algebraically closed field
$k$. Assume that $X$ has exactly two irreducible 
components, denoted $Y$ and $Z$, that they are smooth and intersect transversally 
at a single point, denoted $P$. Let $\pi\colon\mathcal X\to B$ be a 
{\it smoothing} of $X$, that is, a flat, projective map
to $B:=\text{Spec}(k[[t]])$ whose generic fiber is smooth and
special fiber is isomorphic to $X$. We let $\eta$ and $o$ denote the
generic and special points of $B$, and $X_\eta$ and $X_o$ the
respective fibers of $\pi$. Notice that, by semicontinuity, not only
is $X_\eta$ smooth, but also geometrically connected. We
will fix an identification of $X_o$ with $X$.

Since $\pi$ is flat and $B$ is regular, the total space $\mathcal X$ is
regular except possibly at the node $P$. Furthermore, since the general
fiber of $\pi$ is smooth,
there are a positive integer $\delta$ and a $k[[t]]$-algebra
isomorphism (see \cite{ACH}, pp.~104--109):
\begin{equation}
\widehat{\mathcal{O}}_{\mathcal{X},P}\cong\frac{k[[t,y,z]]}{(yz-t^{\delta})}.\label{lociso}
\end{equation}
\noindent
The integer $\delta$ is called the {\it singularity degree of
$\pi$ at $P$}. (Also, we say that the singularity of
$\mathcal{X}$ at $P$ is of type $A_{\delta-1}$.) We say that $\pi$ is
a {\it regular smoothing} if its singularity type at $P$
is $1$, in other words, if $\mathcal{X}$ is regular.

\begin{lem}\label{lemma1} There is a unique effective Cartier divisor
of $\X$ whose associated $1$-cycle is $i[Y]$ (resp.~$i[Z]$) if and only
if $\delta|i$.
\end{lem}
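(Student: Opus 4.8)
The plan is to reduce everything to a local computation at $P$, where the model \eqref{lociso} is available, and to exploit normality. First I would note that $\X$ is normal: away from $P$ it is regular, and at $P$ the hypersurface $yz-t^\delta$ is Cohen--Macaulay and regular in codimension $1$, so Serre's criterion applies. On a normal integral scheme the cycle map from Cartier to Weil divisors is injective: a Cartier divisor with trivial associated cycle has local equations that are units in every codimension-$1$ local ring, hence units by normality. This disposes of \emph{uniqueness} for free, since an effective Cartier divisor is then determined by its cycle, so at most one has cycle $i[Y]$.

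For the rest I would pass to $R:=\widehat{\O}_{\X,P}\cong k[[t,y,z]]/(yz-t^\delta)$ and compute the two relevant valuations. Writing $\mathfrak p_Y=(z,t)$ and $\mathfrak p_Z=(y,t)$ for the height-$1$ primes of the two branches, each localization $R_{\mathfrak p_Y}$, $R_{\mathfrak p_Z}$ is a DVR; since $y$ is a unit at $\mathfrak p_Y$, the relation gives $z=t^\delta/y$, so $t$ is a uniformizer and $v_Y(t)=1$, $v_Y(y)=0$, $v_Y(z)=\delta$ (symmetrically for $v_Z$). In particular $V(z)=Y$ set-theoretically and $\div(z)=\delta\,Y$ on $\operatorname{Spec}R$. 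Thus $\delta Y$ is principal near $P$; as invertibility descends along the faithfully flat map $\O_{\X,P}\to R$ (and $\delta Y$ is plainly Cartier away from $P$), the sheaf $\O_\X(\delta Y)$ is a line bundle. This yields the ``if'' direction: when $\delta\mid i$, say $i=\delta m$, the divisor $iY=m(\delta Y)$ is an effective Cartier divisor with cycle $i[Y]$.

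For the converse I would argue by degrees on the complete curve $Z$. Since $Y$ and $Z$ meet only at $P$, the bundle $\O_\X(\delta Y)|_Z$ is supported there, and from $\div(z)=\delta Y$ one finds $\O_\X(-\delta Y)|_Z=(z|_Z)\subseteq k[[z]]$ of colength $1$, so $\deg\bigl(\O_\X(\delta Y)|_Z\bigr)=1$. Now suppose an effective Cartier divisor $D$ with cycle $i[Y]$ exists. Then $\delta D$ and $i(\delta Y)$ are effective Cartier divisors with the same cycle $i\delta[Y]$, hence equal by the uniqueness above; restricting the resulting isomorphism $\O_\X(D)^{\ox\delta}\cong\O_\X(\delta Y)^{\ox i}$ to $Z$ and taking degrees gives $\delta\cdot\deg\bigl(\O_\X(D)|_Z\bigr)=i$. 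As $\deg\bigl(\O_\X(D)|_Z\bigr)\in\z$, this forces $\delta\mid i$. The statement for $i[Z]$ is identical after exchanging $y$ and $z$.

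The step I expect to carry the real content, and to demand the most care, is the local valuation computation together with the passage between $\O_{\X,P}$ and its completion $R$: one must check that the branches $Y,Z$ remain the height-$1$ primes $(z,t),(y,t)$ after completion, and that invertibility—hence the Cartier property and the normalization $\deg(\O_\X(\delta Y)|_Z)=1$—is faithfully reflected under completion. Once these local facts are secured, normality, uniqueness, and the final divisibility all follow formally.
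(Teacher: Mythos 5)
Your proof is correct, but it takes a genuinely different route from the paper's, most visibly in the divisibility (``only if'') direction. The paper proves uniqueness by observing that any effective Cartier divisor with cycle $i[Y]$ must be the schematic closure of the unique such divisor on the regular locus $\X-P$, and then settles existence by an explicit ideal computation in $A=k[[t,y,z]]/(yz-t^{\delta})$: the ideal of that closure is $I_z\cap A=y^q(y,t^r)A$, which is principal precisely when the remainder $r$ vanishes. You instead get uniqueness from normality of $\X$ (Serre's criterion at the $A_{\delta-1}$-singularity) together with injectivity of the cycle map on a normal integral scheme, and you replace the computation of $I_z\cap A$ by a global intersection-theoretic argument: from $\div(z)=\delta[Y]$ you deduce that $\delta Y$ is Cartier with $\deg\bigl(\O_\X(\delta Y)|_Z\bigr)=1$, and then for any effective Cartier $D$ with cycle $i[Y]$ uniqueness forces $\delta D=i(\delta Y)$, so taking degrees on $Z$ gives $\delta\cdot\deg\bigl(\O_\X(D)|_Z\bigr)=i$, whence $\delta\mid i$. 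In effect you establish parts (a)--(b) of Proposition \ref{Rem3.1} \emph{before} the lemma rather than after it; there is no circularity, since you derive them directly from the local model \eqref{lociso}. As for what each approach buys: the paper's argument is purely local---it never uses that $Z$ is a complete curve---and the explicit form $y^q(y,t^r)A$ of the divisorial ideal is exactly the kind of description that underlies the later analysis of the twists $\I^{(i)}$; your argument is softer and avoids that computation, but it imports the projectivity of $Z$ and rests on the completion-descent facts you rightly flag (invertibility and the identification of the divisorial ideal are checked over $R=\widehat{\O}_{\X,P}$). Note, though, that this last reliance is shared: the paper's own proof also silently identifies the ideal of $\mathcal{Y}_i$ with an ideal of the complete local ring $A$, so neither proof is more exposed on that point.
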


\begin{proof} Since $\X$ is regular off $P$, there is a unique 
effective Cartier divisor $\mathcal Y_i^*$ on $\X^*:=\X-P$ whose associated $1$-cycle
  is $i[Y-P]$. If there were an effective Cartier divisor on $\X$
  associated to $i[Y]$, it would be the schematic closure $\mathcal
  Y_i$ of $\mathcal Y_i^*$, whence unique. 

Now, fix an isomorphism of the form
  (\ref{lociso}), and let $A:=k[[t,y,z]]/(yz-t^\delta)$. Up to
  exchanging $y$ with $z$, the ideal
  defining $Y$ (resp.~$Z$) in $\widehat{\mathcal{O}}_{\mathcal{X},P}$ corresponds
  to $(y,t)A$ (resp.~$(z,t)A$) in $A$. Let $I\subset A$ be the ideal
  corresponding to $\mathcal Y_i$. Localizing, $I_z=t^{i}A_z$, and thus
$I=I_z\cap A$.

We claim that $I_z\cap A=y^q(y,t^r)A$, where $q$ is the
quotient in the Euclidean division of $i$ by $\delta$ and $r$ is the
remainder. Indeed, since
$yA_z=t^\delta A_z$, it follows that $y^q(y,t^r)A_z=t^iA_z$, and thus
$I_z\cap A\supseteq y^q(y,t^r)$. On the other
hand, if $g\in I_z \cap A$ then there is an integer $n\geq i$ such
that $z^ng\in t^iA$. Thus $z^ng\in z^qy^qt^rA$. Since $y,z$ form a
regular sequence of $A$, it follows that $g=y^qg'$, where
$z^{n-q}g'\in t^rA$. Since $z$ is not a zero divisor modulo
$(y,t^r)A$, it follows that $g'\in (y,t^r)A$, and thus $g\in y^q(y,t^r)A$.

Finally, since $I=y^q(y,t^r)A$, we have that $I$ is principal if and only if
$(y,t^r)A$ is principal, thus if and only if $r=0$.
\end{proof}

We let $\delta Y$ (resp.~$\delta Z$) denote the effective Cartier
divisor of $\mathcal {X}$ whose associated $1$-cycle is $\delta[Y]$
(resp.~$\delta[Z]$).

\begin{prop}\label{Rem3.1} The following statements hold:
\begin{description}
\item (a) $\delta Y\cdot Z=\delta Z\cdot Y=1$.
\item (b) $\O_{\mathcal{X}}(\delta Y)|_Z\cong\O_Z(P)$ and
$\O_{\mathcal {X}}(\delta Z)|_Y\cong\O_Y(P)$.
\item (c) $\O_{\mathcal{X}}(\delta Y)|_Y\cong\O_Y(-P)$ and
$\O_{\mathcal {X}}(\delta Z)|_Z\cong\O_Z(-P)$.
\end{description}
\end{prop}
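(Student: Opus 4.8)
The plan is to reduce everything to the local description of $\delta Y$ and $\delta Z$ near $P$ obtained in Lemma~\ref{lemma1}, together with one global relation between the two divisors. Keeping the notation $A=k[[t,y,z]]/(yz-t^\delta)$ from the proof of Lemma~\ref{lemma1}, that computation shows that the ideal cutting out $\delta Y$ (resp.~$\delta Z$) in $A$ is $(y)$ (resp.~$(z)$); in other words, near $P$ we have $\delta Y=\div(y)$ and $\delta Z=\div(z)$. Since $yz=t^\delta$ in $A$, locally at $P$ this gives $\delta Y+\delta Z=\div(t^\delta)$. Away from $P$ the space $\X$ is regular and $Y,Z$ meet only at $P$, so $t^\delta$ cuts out exactly $\delta(Y-P)$ near $Y-P$ and $\delta(Z-P)$ near $Z-P$; hence the equality $\delta Y+\delta Z=\div(t^\delta)$ holds globally on $\X$. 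As $t^\delta$ is a global function, this yields the key isomorphism $\O_\X(\delta Y)\cong\O_\X(-\delta Z)$.

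Next I would prove (b) and deduce (a). To identify $\O_\X(\delta Y)|_Z$, restrict the local generator $y$ of $\O_\X(-\delta Y)$ to $Z$. Since $Z=V(z,t)$ has local ring $A/(z,t)\cong k[[y]]$, with $y$ vanishing precisely at $P$, the image of $y$ generates the ideal of $P$ in $\O_Z$ near $P$, while away from $P$ the divisor $\delta Y$ does not meet $Z$; therefore $\O_\X(-\delta Y)|_Z\cong\O_Z(-P)$, i.e.\ $\O_\X(\delta Y)|_Z\cong\O_Z(P)$. Exchanging the roles of $y$ and $z$ gives $\O_\X(\delta Z)|_Y\cong\O_Y(P)$, which is (b). Part (a) is then immediate, since by definition $\delta Y\cdot Z=\deg_Z\O_\X(\delta Y)|_Z=\deg \O_Z(P)=1$, and symmetrically $\delta Z\cdot Y=1$.

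Finally I would obtain (c), which is the one place where a naive computation fails, and thus the main obstacle: one cannot restrict $\O_\X(\delta Y)$ to $Y$ by pulling back a local equation, because $Y$ lies in the support of $\delta Y$. This is precisely what the global relation is for. Restricting the isomorphism $\O_\X(\delta Y)\cong\O_\X(-\delta Z)$ to $Y$ and invoking (b) gives
\[
\O_\X(\delta Y)|_Y\cong\O_\X(-\delta Z)|_Y\cong\big(\O_\X(\delta Z)|_Y\big)^{\ox(-1)}\cong\O_Y(P)^{\ox(-1)}=\O_Y(-P),
\]
and symmetrically $\O_\X(\delta Z)|_Z\cong\O_Z(-P)$, completing the proof.
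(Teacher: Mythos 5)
Your proposal is correct and follows essentially the same approach as the paper's proof: the local equations $y$ and $z$ for $\delta Y$ and $\delta Z$ at $P$ (coming from Lemma~\ref{lemma1}) yield (a) and (b), and the relation $\delta Y+\delta Z=\mathrm{div}(t^\delta)$, giving $\O_{\mathcal{X}}(\delta Y)\ox\O_{\mathcal{X}}(\delta Z)\cong\O_{\mathcal{X}}$, yields (c) via (b). The paper's version is merely terser, leaving the restriction computations you spelled out implicit.
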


\begin{proof} Fixing an isomorphism of the form
  (\ref{lociso}), we have that $Y$ (resp. $Z$) is defined at $P$ by, say,
  $(y,t)$ (resp.~$(z,t)$), whereas $\delta Y$ (resp. $\delta Z$) is
  defined by $y$   (resp.~$z$). The first two statements follow.
As for the last statement, it is enough to observe that
$\delta Y+\delta Z=\text{div}(t^\delta)$, and thus $\O_\X(\delta
Y)\ox\O_\X(\delta Z)=\O_\X$.
\end{proof}

Blowing up $\X$ at $P$, and then successively at the singular points
of each blowup, we end up with a regular scheme $\wt\X$ and a map
$\psi\:\wt\X\to\X$ such that the composition $\wt\pi:=\pi\psi$ is a
regular smoothing of its special fiber. Furthermore, the special fiber
can be identified with the curve $\wt X$ obtained from $X$ by
splitting the branches of $X$
at the node $P$ and connecting them by a chain $E$ of rational smooth curves
of length $\delta-1$, in such a way that $\psi|_{\wt X}\:\wt X\to X$
is the map collapsing $E$ to $P$. We say that $\psi$ is the
\emph{semistable reduction} of $\pi$.

We will also denote by $Y$
(resp. $Z$) the irreducible component of $\wt X$ mapped isomorphically
by $\psi$
to $Y$ (resp.~$Z$) on $X$. Also, we identify the generic fiber of
$\wt\pi$ with that of
$\pi$ through $\psi$. Finally, we will order the rational components
$E_1,\dots,E_{\delta-1}$ in such a way that $E_1$ intersects $Y$,
while $E_{\delta-1}$ intersects $Z$, and $E_i$ intersects $E_{i+1}$
for $i=1,\dots,\delta-2$.

\begin{prop} Let $L_\eta$ be an invertible sheaf on $X_\eta$ of degree
  $d$. Then
  there is an invertible sheaf $\L$ on $\wt\X$ whose restriction to the
  generic fiber is $L_\eta$, whose restriction to $Y$ (resp.~$Z$)
  has degree $d$ (resp.~$0$) and whose restriction to $E$ is trivial.
\end{prop}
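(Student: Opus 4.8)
The plan is to first extend $L_\eta$ to \emph{some} invertible sheaf on the regular total space $\wt\X$, and then to correct its multidegree along the special fiber $\wt X$ by twisting with the irreducible components, each of which is a Cartier divisor on $\wt\X$ precisely because $\wt\X$ is regular.

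First I would produce an extension $\L_0$. Since $\wt\X$ is regular, hence locally factorial and integral, one has $\Pic(\wt\X)=\mathrm{Cl}(\wt\X)$, and the restriction map to the generic fiber $\mathrm{Cl}(\wt\X)\to\mathrm{Cl}(X_\eta)=\Pic(X_\eta)$ is surjective: a divisor representing $L_\eta$ extends by taking its schematic closure in $\wt\X$. Pick such an $\L_0$ and write its multidegree on the components $Y,E_1,\dots,E_{\delta-1},Z$ as $(c_Y,c_1,\dots,c_{\delta-1},c_Z)$. Since $\wt\pi$ is flat and proper, the total degree $c_Y+c_1+\cdots+c_{\delta-1}+c_Z$ equals the degree $d$ of $L_\eta$ on the generic fiber, the degree being computable as $\chi(\L_0|_{\wt X})-\chi(\O_{\wt X})$ and hence constant in the family.

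Next I would adjust the multidegree. Any two extensions of $L_\eta$ differ by $\O_{\wt\X}(D)$ with $D$ supported on $\wt X$, by the localization sequence $\bigoplus_C\z[C]\to\Pic(\wt\X)\to\Pic(X_\eta)\to 0$; and twisting by a component $C$ changes the degree of the restriction to a component $C'$ by the intersection number $C\cdot C'$. As each component meets the whole fiber $\wt X\sim\div(t)$ in degree $0$, we have $C^2=-\sum_{C'\neq C}C\cdot C'$, so the intersection matrix of the chain $Y-E_1-\cdots-E_{\delta-1}-Z$ is exactly minus the graph Laplacian of a path. Concretely, I would push degree from the $Z$-end toward $Y$: twisting by $Z$ moves one unit of degree from $Z$ to $E_{\delta-1}$; twisting by $Z+E_{\delta-1}$ then moves one unit from $E_{\delta-1}$ to $E_{\delta-2}$ while keeping the degree on $Z$ equal to $0$; and in general twisting by $Z+E_{\delta-1}+\cdots+E_j$ moves one unit from $E_j$ to its neighbor nearer $Y$ and fixes the degrees on all of $Z,E_{\delta-1},\dots,E_{j+1}$. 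Iterating, I can zero out the degrees on $Z,E_{\delta-1},\dots,E_1$ in turn, and since the total degree is preserved it accumulates as $d$ on $Y$. The resulting sheaf $\L$ then has the required multidegree, and as each $E_i\cong\p^1$, degree $0$ on every $E_i$ forces $\L|_E$ to be trivial.

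The hard part is precisely the \emph{integrality} of this correction, and it is exactly what forces the passage to the semistable model $\wt\X$ rather than working on $\X$ directly. On $\X$ the node has thickness $\delta$ and the analogous obstruction group is $\z/\delta\z$ (compare Lemma~\ref{lemma1} and Proposition~\ref{Rem3.1}, where only multiples of $\delta$ give Cartier divisors concentrated on a single branch), so not every total-degree-$d$ multidegree is realizable there. On $\wt\X$, by contrast, the dual graph of $\wt X$ is a tree, so its critical group is trivial and the columns of the intersection matrix span the full sublattice of multidegrees of total degree $0$; this is what guarantees that the explicit chain of twists above exists over $\z$. The remaining work is the routine bookkeeping of the intersection numbers along the chain to confirm that each twisting step has the claimed effect.
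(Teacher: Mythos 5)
Your proposal is correct and follows essentially the same route as the paper: extend $L_\eta$ to some invertible sheaf on the regular model $\wt\X$, then correct the multidegree by twisting with the components, which are Cartier divisors by regularity. Your iterative twists by $Z$, then $Z+E_{\delta-1}$, then $Z+E_{\delta-1}+E_{\delta-2}$, and so on, compose exactly into the paper's single closed-form twist $\O_{\wt\X}\big(nZ+\sum_{i=1}^{\delta-1}(d_i+\cdots+d_{\delta-1}+n)(Z+E_i+\cdots+E_{\delta-1})\big)$, so the two arguments coincide.
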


\begin{proof} Since $\wt\X$ is regular, there is an invertible
  extension $\M$ of $L_\eta$ to $\wt\X$. Let $m$ and $n$ be the
  degrees of its restriction to $Y$ and $Z$, respectively, and $d_i$
  the degree on $E_i$ for $i=1,\dots,\delta-1$. Since $\wt\X$ is
  regular, $Y$, $Z$ and the $E_i$ are effective Cartier divisors of
  $\wt X$. A simple computation shows that
$$
\L:=\M\ox
\O_{\wt\X}\Big(nZ+\sum_{i=1}^{\delta-1}(d_i+\cdots+d_{\delta-1}+n)(Z+E_i+\cdots+E_{\delta-1})
\Big)
$$
has the required degrees.
\end{proof}

Let $\I$ be a \emph{torsion-free rank-1} sheaf on $\X/B$. In other words,
$\I$ is a coherent sheaf on $\X$, flat over $B$, invertible everywhere
but possibly at $P$, and such that $\I_P$ is isomorphic to an ideal of
$\O_{\X,P}$. Any invertible sheaf on $\X$ is torsion-free rank-1 on
$\X/B$, for instance $\O_\X$.

In \cite{EE2}, \S 3, a procedure was outlined to modify
$\I$: Set $\mathcal{I}^{(0)}:=\I$, and for each integer $i>0$,
define the \emph{$i$-th twist by $Z$} of $\I$ by:
$$
\mathcal{I}^{(i)}:=\ker\left(\mathcal{I}^{(i-1)}\longrightarrow
\frac{\mathcal{I}^{(i-1)}|_{Z}}{\text{torsion}}
\right).
$$
Notice that $\I^{(i)}\supseteq\I^{(i-1)}\I_{Z|\X}$, with equality away
from $P$. In particular, the $\mathcal{I}^{(i)}$ are all equal away from
$Z$, thus on $X_\eta$. Furthermore, as pointed out in \cite{EE2},
p.~3063, it follows from an argument analogous to the one found
in \cite{La}, Prop.~6, p.~100, that the $\I^{(i)}$ are
torsion-free, rank-1 on $\X/B$. Finally, it is stated in
\cite{EE2}, Lemma 23, p.~3063, that there is a natural surjection
of short exact sequences:
\begin{equation}\label{propcel}
\begin{CD}
0 @>>> \I^{(i+1)} @>>> \I^{(i)} @>>>
\frac{\mathcal{I}^{(i)}|_{Z}}{\text{torsion}} @>>> 0\\
@. @VVV @VVV @| @.\\
0 @>>> \frac{\mathcal{I}^{(i+1)}|_{Y}}{\text{torsion}} @>>>
\mathcal{I}^{(i)}|_{X} @>>>
\frac{\mathcal{I}^{(i)}|_{Z}}{\text{torsion}} @>>> 0.
\end{CD}
\end{equation}

Clearly, $\O_\X^{(1)}=\I_{Z|\X}$. Also, $\O_\X^{(i)}$ is a
sheaf of ideals containing $\I^i_{Z|\X}$ and equal to it away from
$P$, for each $i\geq 0$. If $\I$ is invertible then,
by exactness of $\I\ox -$, we have that $\I^{(i)}=\I\ox\O_\X^{(i)}$ for
each $i\geq 0$. In this case, it follows from \cite{CEG}, Prop.~3.1,
p.~13, that there
are isomorphisms
\begin{eqnarray}\label{propcel2}
\frac{\mathcal{I}^{(i+1)}|_{Y}}{\text{torsion}}\cong\I|_Y\ox\O_{Y}(-(q+1)P)
\quad\text{and}\quad
\frac{\mathcal{I}^{(i)}|_{Z}}{\text{torsion}}\cong\I|_Z\ox\O_{Z}(qP),
\end{eqnarray}
where $q$ is the quotient of the Euclidean division of $i$ by
$\delta$. Furthermore, it follows from the local description given in
the first paragraph of the proof of \cite{CEG}, Prop.~3.1, p.~14,
that $\I^{(i)}$ is invertible if and only if $\delta|i$, in which case
$\mathcal{I}^{(i)}\cong\I\ox\O_{\mathcal{X}}(-q(\delta Z))$.

Thus we have a complete description of the sheaves
$\I^{(i)}|_X$ when $\I$ is invertible. Namely, for each integer $i$,
letting $q$ and $r$ be the quotient and the remainder of the Euclidean
division of $i$ by $\delta$, if $r=0$ then $\I^{(i)}|_X$ is the
invertible sheaf on $X$ whose restrictions to $Y$ and $Z$ are
$\I|_Y\ox\O_{Y}(-qP)$ and $\I|_Z\ox\O_{Z}(qP)$, unique since $X$ is of
compact type. On the other hand, if $r\neq 0$, then $\I^{(i)}|_X$
is not invertible, whence the bottom sequence in Diagram \eqref{propcel}
splits and we have $\I^{(i)}|_X=\I|_Y\ox\O_{Y}(-(q+1)P)\oplus
\I|_Z\ox\O_{Z}(qP)$.

There is a parallel construction on $\wt{\X}$, which is helpful to have
in mind. Namely, if $\L$ is an
invertible sheaf on $\wt\X$, let $\L^{(0)}:=\L$, and for each integer
$i>0$ let
$$
\L^{(i)}:=\L^{(i-1)}\otimes\O_{\wt\X}(-E_r-E_{r+1}-\cdots-E_{\delta-1}-Z),
$$
where $r$ is the remainder of the Euclidean division of $i$ by
$\delta$, if $r>0$, and
$$
\L^{(i)}:=\L^{(i-1)}\otimes\O_{\wt\X}(-Z)
$$
if $r=0$.

Let $\L$ be an invertible sheaf on $\wt\X$ whose restriction
$\L|_{E_j}$
has degree 0 but for
at most one $j$, for which the degree is 1. Let $\ell:=0$ if no such $j$ occurs; otherwise, let
$\ell$ be that $j$. A simple computation shows that $\L=\M^{(\ell)}$ for a
certain invertible sheaf $\M$ whose restriction to $E$ is
trivial. Furthermore, $\M^{(i)}|_{E_j}$ has
degree 0 for each $j\neq r$, whereas
$\M^{(i)}|_{E_r}$ has degree $1$, where $r$ is the remainder of the
Euclidean division
of $i$ by $\delta$. It follows from \cite{EP}, Thm.~3.1,
that $R^1\psi_*\M^{(i)}=0$, and $\psi_*\M^{(i)}$ is a
torsion-free, rank-1 sheaf on $\X/B$ whose formation commutes with
base change, for each $i\geq 0$. Furthermore, $\psi_*\M^{(i)}$ is
invertible if and only if $\delta|i$.

\begin{prop}\label{LI} If $\I=\psi_*\L$ then
  $\I^{(i)}=\psi_*\L^{(i)}$ for each $i\geq 0$.
\end{prop}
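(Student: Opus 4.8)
The plan is to argue by induction on $i$. For $i=0$ both sides equal $\I=\psi_*\L$ by definition. For the inductive step, assume $\I^{(i-1)}=\psi_*\L^{(i-1)}$, write $r$ for the remainder of the Euclidean division of $i$ by $\delta$, and set $D_r:=E_r+E_{r+1}+\cdots+E_{\delta-1}+Z$ if $r>0$ and $D_0:=Z$, so that $\L^{(i)}=\L^{(i-1)}\ox\O_{\wt\X}(-D_r)$. Putting $\mathcal F:=\L^{(i-1)}$ and $\mathcal G:=\psi_*\mathcal F=\I^{(i-1)}$, it suffices to prove that $\psi_*(\mathcal F\ox\O_{\wt\X}(-D_r))$ equals $\mathcal G^{(1)}:=\ker(\mathcal G\to\frac{\mathcal G|_Z}{\text{torsion}})$, since the latter is by definition $\I^{(i)}$.

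First I would push forward the structure sequence of the effective Cartier divisor $D_r\subset\wt\X$, namely
$$0\lra\mathcal F\ox\O_{\wt\X}(-D_r)\lra\mathcal F\lra\mathcal F\ox\O_{D_r}\lra 0.$$
Since $\psi_*$ is left exact, this immediately yields $\psi_*(\mathcal F\ox\O_{\wt\X}(-D_r))=\ker(\mathcal G\xrightarrow{\alpha}\psi_*(\mathcal F\ox\O_{D_r}))$. Thus the whole statement reduces to producing an isomorphism $\psi_*(\mathcal F\ox\O_{D_r})\cong\frac{\mathcal G|_Z}{\text{torsion}}$ under which $\alpha$ becomes the canonical surjection $\mathcal G\to\frac{\mathcal G|_Z}{\text{torsion}}$; matching kernels then gives the claim. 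Because $\psi$ is an isomorphism over $\X^*=\X-P$ and $D_r$ coincides with $Z$ there, both subsheaves of $\mathcal G$ agree on $\X^*$; hence the comparison is entirely local at $P$, and I would carry it out after completing, using the model $A=k[[t,y,z]]/(yz-t^\delta)$ of (\ref{lociso}) together with the explicit chart description of the iterated blowup $\psi$.

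The core is the identification $\psi_*(\mathcal F\ox\O_{D_r})\cong\frac{\mathcal G|_Z}{\text{torsion}}$ at $P$. Here $\psi(D_r)=Z$, while the tail $E_r\cup\cdots\cup E_{\delta-1}$ of the exceptional chain is contracted to $P$; the point of twisting by exactly $D_r$ rather than by $Z$ alone is that this contracted tail should account precisely for the torsion of $\mathcal G|_Z$ at $P$. Concretely I would compute the pushforward $\psi_*(\mathcal F\ox\O_{D_r})$ through the charts of $\psi$ and identify it with $\frac{\I^{(i-1)}|_Z}{\text{torsion}}$, using the isomorphisms (\ref{propcel2}) of \cite{CEG} and the vanishing and base-change results of \cite{EP}; compatibility of this isomorphism with $\alpha$ and with the canonical restriction map is then forced by naturality on $\X^*$. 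I expect this local identification to be the main obstacle, the delicate points being the case distinction $r=0$ versus $r>0$ and, above all, the case $\delta\nmid(i-1)$, in which $\mathcal G=\I^{(i-1)}$ is not invertible, so that the stalk computation must be performed for a genuinely torsion-free, non-locally-free sheaf at $P$.
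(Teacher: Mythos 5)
Your reduction coincides with the paper's: both arguments induct on $i$, push forward the divisor sequence $0\to\L^{(i)}\to\L^{(i-1)}\to\L^{(i-1)}|_{D_r}\to 0$, and reduce everything to identifying $\psi_*(\L^{(i-1)}|_{D_r})$ with $\I^{(i-1)}|_Z$ modulo torsion, compatibly with the maps out of $\I^{(i-1)}$. But you leave precisely this identification --- which you yourself call the main obstacle --- as an unexecuted plan, and the plan is doubtful: the isomorphisms \eqref{propcel2} you invoke are established only for $\I$ invertible, so they cannot drive a stalk computation in exactly the case you single out as delicate, namely when $\I^{(i-1)}$ is not locally free at $P$. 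The paper settles this step with a short global argument that never looks at stalks. Writing $\N:=\L^{(i-1)}$ and $Q:=E_{\delta-1}\cap Z$, it first pushes forward $0\to\N|_{\ol{D_r-Z}}(-Q)\to\N|_{D_r}\to\N|_Z\to 0$, using that $\N$ has degree $0$ on every component of the contracted chain $\ol{D_r-Z}$, so that $\psi_*$ and $R^1\psi_*$ of $\N|_{\ol{D_r-Z}}(-Q)$ both vanish; hence $\psi_*(\N|_{D_r})\cong\N|_Z$, in particular an \emph{invertible} $\O_Z$-module. Then the natural map $h\:(\psi_*\N)|_Z\to\psi_*(\N|_{D_r})$ kills torsion (the target is invertible), is surjective (both sides are quotients of $\psi_*\N=\I^{(i-1)}$, using the induction hypothesis and $R^1\psi_*\L^{(i)}=0$), and has torsion kernel (both sides have rank $1$); so it induces the required isomorphism from $\I^{(i-1)}|_Z$ modulo torsion. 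Your appeal to naturality over $\X^*$ to match the two kernels is fine, but only after one knows the target is torsion-free over $Z$ --- which is exactly what the vanishing step provides and your outline does not.

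There is also a genuine error in your indexing, inherited from the paper's loosely stated definition of $\L^{(i)}$ but fatal if kept: $r$ must be the remainder of the division of $i+\ell$ by $\delta$, not of $i$, where $\ell$ is the shift with $\L=\M^{(\ell)}$; this is what the paper's proof uses. That choice guarantees that the chain components $E_r,\dots,E_{\delta-1}$ occurring in $D_r$ are exactly those on which $\L^{(i-1)}$ has degree $0$; with your choice, whenever $\ell\neq 0$ (equivalently, whenever $\I$ itself is not invertible) the divisor $D_r$ contains the component of degree $1$, the vanishing above fails, and the identity you reduce to is false. Concretely, take $\delta=2$ and $\L=\M^{(1)}$ with $\M$ trivial near the fiber over $P$, so that locally $\I=\psi_*\L=(z,t)A$ with $A$ as in \eqref{lociso}; then $\I^{(1)}=zA$, which equals $\psi_*(\L\ox\O_{\wt\X}(-Z))$, whereas your recipe would give $\psi_*(\L\ox\O_{\wt\X}(-E_1-Z))=z\mathfrak{m}$, where $\mathfrak{m}$ is the maximal ideal of $A$ --- a strictly smaller sheaf, so no isomorphism of the kind you postulate can exist. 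A correct write-up must first fix the convention $\L^{(i)}:=\M^{(i+\ell)}$ and then supply the missing identification, for instance by the two-step argument above.
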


\begin{proof} Clearly, $\I^{(0)}=\psi_*\L^{(0)}$. Assume by induction
  that $\I^{(i-1)}=\psi_*\L^{(i-1)}$ for a certain $i>0$. Let
  $r$ be the remainder of the
Euclidean division
of $i+\ell$ by $\delta$.

Suppose first that $r>0$. Consider the natural exact sequence defining
$\L^{(i)}$:
$$
0\to\L^{(i)}\to \L^{(i-1)} \to
\L^{(i-1)}|_{E_r+\cdots+E_{\delta-1}+Z}\to 0.
$$
Since $R^1\psi_*\L^{(i)}=0$, applying $\psi_*$ to it we get another
natural short exact sequence:
$$
0\to\psi_*\L^{(i)}\to\psi_*(\L^{(i-1)})\to
\psi_*(\L^{(i-1)}|_{E_r+\cdots+E_{\delta-1}+Z})\to 0.
$$
It remains to show that
$\psi_*(\L^{(i-1)}|_{E_r+\cdots+E_{\delta-1}+Z})$ is $\I^{(i-1)}|_Z$
modulo torsion.

To simplify the notation, set $\N:=\L^{(i-1)}$ and
$Z_r:=E_r+\cdots+E_{\delta-1}+Z$. We need to prove that
$\psi_*(\N|_{Z_r})$ is $\psi_*(\N)|_Z$ modulo torsion. First observe that the 
degree of $\N|_{E_j}$ is $0$ for every $j\geq r$. Thus
$$
\psi_*(\N|_{\ol{Z_r-Z}}(-Q))=0=R^1\psi_*(\N|_{\ol{Z_r-Z}}(-Q))=0,
$$
where $Q$ is the intersection of $E_{\delta-1}$ with $Z$. By applying $\psi_*$ 
to the short exact sequence
$$
0\to\N|_{\ol{Z_r-Z}}(-Q)\to\N|_{Z_r}\to\N|_{Z}\to 0,
$$
and considering the associated long exact sequence, it follows that 
$\psi_*(\N|_{Z_r})\cong\psi_*(\N|_Z)$. In particular, $\psi_*(\N|_{Z_r})$ is an 
invertible sheaf on $Z$, isomorphic to $\N|_Z$.

Since $\psi(Z_r)=Z$, there is a natural map
$$
h\:\psi_*(\N)|_{Z}\to\psi_*(\N|_{Z_r}).
$$
Since $\psi_*(\N|_{Z_r})$ is invertible, the torsion is mapped to
zero. Since the source is a quotient of $\I^{(i-1)}$, and the target
is a quotient of
$\psi_*(\L^{(i-1)})$, and both are equal
by induction hypothesis, it follows that $h$ is surjective. Since both source 
and target are rank 1, the kernel of $h$ is torsion. So $h$ induces an 
isomorphism between $\psi_*(\N)|_Z$ modulo torsion and $\psi_*(\N|_{Z_r})$.

If $r=0$, the proof goes through as before, but simpler, with $Z$
replacing $Z_r$.
\end{proof}

Clearly, we may define in a similar way the \emph{$i$-th twist by $Y$}
of $\I$, for every $i\geq 0$. For each $i\geq 0$,
let $\I^{(-i)}$ be $t^{-i}$ times the $i$-th twist by $Y$ of $\I$. Since the 
first twist is a subsheaf of $\I$ containing $t\I$, it follows that
$\I\subseteq\I^{(-1)}\subseteq\I^{(-2)}\subseteq\cdots$. Furthermore, the
notation is justified, as it follows from \cite{EE2}, Lemma 23, p.~{3063} that
\begin{equation}\label{ZI}
(\I^{(i)})^{(j)}=\I^{(i+j)}\quad\text{for all $i,j\in\mathbb Z$}.
\end{equation}
Also, by the same lemma, there is a natural surjection
of short exact sequences:
\begin{equation}\label{propcel3}
\begin{CD}
0 @>>> \I^{(i-1)} @>t>> \I^{(i)} @>>>
\frac{\mathcal{I}^{(i)}|_{Y}}{\text{torsion}} @>>> 0\\
@. @VVV @VVV @| @.\\
0 @>>> \frac{\mathcal{I}^{(i-1)}|_{Z}}{\text{torsion}} @>>>
\mathcal{I}^{(i)}|_{X} @>>>
\frac{\mathcal{I}^{(i)}|_{Y}}{\text{torsion}} @>>> 0.
\end{CD}
\end{equation}
Observe that, because of \eqref{ZI}, Diagrams \eqref{propcel} and
\eqref{propcel3} can be considered for every $i\in\mathbb Z$.

If $\I$ is invertible, then isomorphisms analogous to those
in \eqref{propcel2} follow, again by \cite{CEG}, Prop.~3.1, p.~13. Here, we will
display them in a format valid for every $i\in\mathbb Z$:
\begin{eqnarray}\label{propcel4}
\frac{\mathcal{I}^{(i)}|_{Y}}{\text{torsion}}\cong\I|_Y\ox\O_{Y}(q_-P)
\quad\text{and}\quad
\frac{\mathcal{I}^{(i)}|_{Z}}{\text{torsion}}\cong\I|_Z\ox\O_{Z}(q_+P),
\end{eqnarray}
where $q_-$ (resp.~$q_+$) is the quotient of the Euclidean division of
$-i$
(resp.~$i$) by $\delta$. These isomorphisms describe the $\I^{(i)}|_X$
completely: For $i\in\delta\mathbb Z$, the sheaf $\I^{(i)}|_X$ is the
invertible sheaf on $X$ whose restrictions to $Y$ and $Z$ are given by
\eqref{propcel4}, whereas for $i\not\in\delta\mathbb Z$, the sheaf
$\I^{(i)}|_X$ is the direct sum of the sheaves in \eqref{propcel4}.

When we put together the bottom exact sequences of
Diagrams \eqref{propcel} and
\eqref{propcel3}, we obtain the following Diagram:
\begin{equation}\label{propcel5}
\xymatrix{
0\ar[r]& \frac{\mathcal{I}^{(i+1)}|_{Y}}{\text{torsion}}\ar@{=}[d]\ar[r]   &
\mathcal{I}^{(i)}|_{X}\ar[d]<1ex>^{\varphi^i}\ar[r]  &
\frac{\mathcal{I}^{(i)}|_{Z}}{\text{torsion}}\ar@{=}[d]\ar[r] &0\\
0 & \frac{\mathcal{I}^{(i+1)}|_{Y}}{\text{torsion}}\ar[l]   &
\mathcal{I}^{(i+1)}|_{X}\ar[u]<1ex>^{\varphi_i}\ar[l]  &
\frac{\mathcal{I}^{(i)}|_{Z}}{\text{torsion}}\ar[l] &0\ar[l]}.
\end{equation}
The two maps, $\phi_i\:\I^{(i+1)}\to\I^{(i)}$ and
$\phi^i\:\I^{(i)}\to\I^{(i+1)}$, both natural inclusions, the
second involving a multiplication by $t$, restrict to isomorphisms on
$X_\eta$, but restrict to maps on $X$ whose compositions both ways are
zero; they are the maps $\varphi_i$ and $\varphi^i$ in Diagram
\eqref{propcel5}.

\begin{defn}\label{twseq}
A
\emph{twist $\delta$-sequence} associated to an invertible sheaf $I$
on $X$ is a collection of
sheaves $I^{(i)}$ and maps $\varphi^i,\varphi_i$ indexed by $i\in\mathbb Z$
such that:
\begin{description}
\item (a) The sheaves $I^{(i)}$ are invertible with restrictions
$I|_Y\ox\O_Y(q_-P)$ and $I|_Z\ox\O_Z(q_+P)$ if $\delta|i$, and
$$
I^{(i)}=\Big(I|_Y\ox\O_Y(q_-P)\Big)\bigoplus\Big(I|_Z\ox\O_Z(q_+P)\Big)
$$
otherwise;
here, $q_-$ (resp.~$q_+$) is the quotient of the Euclidean division of
$-i$
(resp.~$i$) by $\delta$.
\item (b) The Diagram
\begin{equation}\label{propcel6}
\xymatrix{
0\ar[r] & I|_Y\ox\O_Y(q'_-P)\ar@{=}[d]\ar[r]^{\quad\quad\quad\iota_{i,Y}} &
I^{(i)}\ar[d]<1ex>^{\varphi^i}\ar[r]^{\rho_{i,Z}\quad\quad\quad} &
I|_Z\ox\O_Z(q_+P)\ar@{=}[d]\ar[r] & 0\\
0 & I|_Y\ox\O_Y(q'_-P)\ar[l] &
I^{(i+1)}\ar[u]<1ex>^{\varphi_i}\ar[l]^{\quad\quad\,\,\rho_{i+1,Y}} &
I|_Z\ox\O_Z(q_+P)\ar[l]^{\iota_{i+1,Z}\quad\quad} & 0\ar[l],}
\end{equation}
commutes, where $q_+$ is as before, $q'_-$ is the quotient of the Euclidean
division of $-(i+1)$ by $\delta$, the maps $\rho_{i+1,Y}$ and
$\rho_{i,Z}$ are
the natural
surjections, and $\iota_{i,Y}$ and $\iota_{i+1,Z}$ are the natural
inclusions.
\end{description}
\end{defn}

Observe that $q'_-=q_-$ if $\delta$ does not divide $i$,
whereas $q'_-=q_--1$ otherwise. Similarly, the quotient of the Euclidean division
of $(i+1)$ by $\delta$ is $q_+$ if $\delta$ does not divide $i+1$, and $q_++1$
otherwise. Hence it follows from the description of the sheaves $I^{(i)}$ in
item (a) what the natural inclusions $\iota_{i,Y}$ and
$\iota_{i+1,Z}$ are. Also, notice that any other twist
$\delta$-sequence associated to $I$ is essentially the same, modulo
obvious isomorphisms.

If $\I$ is invertible, it follows from the isomorphisms
\eqref{propcel4} that the sheaves $\I^{(i)}|_X$ and
the maps $\varphi^i$ and $\varphi_i$ of Diagrams \eqref{propcel5} form
a twist $\delta$-sequence associated to $\I|_X$.

\section{Limit linear series}\label{lls3}

As in Section \ref{Tw}, we let $X$ denote a
curve defined over an algebraically closed field
$k$ with exactly two irreducible components, $Y$
and $Z$, that are smooth and intersect transversally at a single point
$P$. 
We let $\pi\colon\mathcal X\to B$ be a smoothing of
$X$, denote by $\eta$ and $o$ the
generic and special points of $B$, and by $X_\eta$ and $X_o$ the
respective fibers of $\pi$. We identify $X_o$ with $X$.

Let $\I$ be a torsion-free rank-1 sheaf on $\X/B$, and
let $\I^{(i)}$ denote the twists of $\I$ for $i\in\mathbb Z$, as
defined in Section \ref{Tw}. Let $I_\eta$ and $I^{(i)}_\eta$ denote
their restrictions to the generic fiber $X_\eta$; they are all equal.
Let $V\subseteq\Gamma(X_\eta,I_\eta)$ be a
vector subspace. Let $r$ denote its projective dimension.
View $V$ as a subspace of
$\Gamma(X_\eta,I_\eta^{(i)})$ for each $i\in\mathbb Z$, and denote by
$\V^{(i)}$ the subsheaf of $\pi_*\I^{(i)}$ consisting of the
sections that restrict to sections in $V$ on the generic fiber.  The
$\V^{(i)}$ are free of rank $r+1$. Let $\phi_i\:\I^{(i+1)}\to\I^{(i)}$ and
$\phi^i\:\I^{(i)}\to\I^{(i+1)}$ be the natural inclusions. Then
\begin{equation}\label{compat}
\begin{aligned}
\pi_*\phi_i(\V^{(i+1)})\subseteq\V^{(i)}&\text{ and }
(\pi_*\phi_{i})^{-1}(\V^{(i)})=\V^{(i+1)}\\
\pi_*\phi^i(\V^{(i)})\subseteq\V^{(i+1)}&\text{ and }
(\pi_*\phi^{i})^{-1}(\V^{(i+1)})=\V^{(i)}
\end{aligned}
\end{equation}

For each $i\in\mathbb Z$, let $V^{(i)}\subseteq\Gamma(X,\I^{(i)}|_X)$ 
be the subspace generated by
the restriction to $X$ of
$\Gamma(B,\V^{(i)})\subseteq\Gamma(\X,\I^{(i)})$. Let $\varphi^i$ and
$\varphi_i$ denote the restrictions of $\phi^i$ and $\phi_i$ to
$X$. Then it follows from \eqref{compat} that
$\varphi^i(V^{(i)})\subseteq V^{(i+1)}$ and
$\varphi_i(V^{(i+1)})\subseteq V^{(i)}$ for every $i$. Moreover,
since $\text{Ker}(\phi^i)=\text{Im}(\phi_i)$ and
$\text{Ker}(\phi_i)=\text{Im}(\phi^i)$, it follows that
$$
\varphi^i(V^{(i)})=\text{Ker}(\varphi_i|_{V^{(i+1)}})\text{ and }
\varphi_i(V^{(i+1)})=\text{Ker}(\varphi^i|_{V^{(i)}}).
$$
If $\I$ is invertible, then the data $(\I|_X;V^{(i)},\,i\in\mathbb Z)$
is an exact level-$\delta$ limit linear series, as defined below.

\begin{defn}\label{def:1} A
{\it level-$\delta$ limit linear series} of $X$ is the data
$\mathfrak g=(I;V^{(i)},\,i\in\mathbb Z)$ of an
invertible sheaf $I$ on $X$ and vector subspaces
$V^{(i)}\subseteq\Gamma(X,I^{(i)})$ for
$i\in\mathbb Z$ of equal dimension such that
$$
\varphi^i(V^{(i)})\subseteq
V^{(i+1)}\text{ and }
\varphi_i(V^{(i+1)})\subseteq
V^{(i)}
$$
for every $i$, where
the sheaves $I^{(i)}$ and the maps $\varphi^i,\varphi_i$ form
the twist $\delta$-sequence associated to $I$. We say that $\mathfrak
g$ has degree $d$ if $I$ has degree $d$, and that $\mathfrak g$ has
rank $r$ if the $V^{(i)}$ have projective dimension $r$. We say that
$\mathfrak g$ is \emph{exact} if moreover
$$
\varphi^i(V^{(i)})=\text{Ker}(\varphi_i|_{V^{(i+1)}})\text{ and }
\varphi_i(V^{(i+1)})=\text{Ker}(\varphi^i|_{V^{(i)}}).
$$
\end{defn}

Though it may seem that the data giving $\mathfrak g$ depend on
infinitely many parameters, this is not true. In fact, there are
integers $i_0$ and $i_d$ such that $I^{(i_0)}$ and $I^{(i_d)}$ are
invertible sheaves, the first with degree $0$ on $Z$, the second with
degree $0$ on $Y$. (Notice that $i_d=i_0+d\delta$.) Then, each
$s\in\Gamma(X,I^{(i)})$ for each $i\leq i_0$ (resp.~$i\geq i_d$) that
vanishes on $Y$ (resp.~$Z$) vanishes on the whole
$X$. This means that $\varphi_{i}|_{V^{(i+1)}}$ (resp.~$\varphi^i|_{V^{(i)}}$) is injective, and thus
$\varphi_{i}(V^{(i+1)})=V^{(i)}$ for every $i<i_0$
(resp.~$\varphi^i(V^{(i)})=V^{(i+1)}$ for every $i\geq i_d$). To
summarize, the $V^{(i)}$ for $i<i_0$ and $i>i_d$ are determined by the
$V^{(i)}$ for $i_0\leq i\leq i_d$. Furthermore, identifying the
level-$\delta$ limit linear series up to shifting, we may assume that
$i_0=0$ and thus $i_d=d\delta$.

If $I$ has degree 0 on $Z$ and $\delta=1$, then the truncation
$\ol{\mathfrak g}=(I,V^{(0)},\dots,V^{(d)})$ of a level-$\delta$
limit linear series $\mathfrak g$ is precisely a limit linear series in the sense
given by Osserman.

For each integer $d$, let ${\rm Pic}^{d}(X)$ denote the degree-$d$ Picard scheme of
$X$, parametrizing invertible sheaves of (total) degree $d$ on $X$.
It decomposes as the disjoint union of the subschemes
${\rm Pic}^{d-i,i}(X)$,
parameterizing invertible sheaves of bidegree
$(d-i,i)$, that is, degrees $d-i$ on $Y$ and $i$ on $Z$. Since $X$ is
of compact type, the restrictions give rise to isomorphisms
${\rm Pic}^{d-i,i}(X)\cong{\rm Pic}^{d-i}(Y)\times{\rm Pic}^i(Z)$.

We may now construct a moduli space for level-$\delta$ limit linear
series inside a product of relative Grassmannians over
$J:={\rm Pic}^{(d,0)}(X)$.

\begin{prop}\label{p1} There exists a projective scheme
$G_{d,\delta}^r(X)$ parameterizing
level-$\delta$ limit linear series of degree $d$ and rank $r$ on $X$.
\end{prop}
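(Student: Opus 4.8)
The plan is to realize $G^r_{d,\delta}(X)$ as a closed subscheme of a finite fibre product of relative Grassmannians over the Picard scheme $J=\Pic^{(d,0)}(X)$. By the discussion following Definition~\ref{def:1}, every level-$\delta$ limit linear series is equivalent, up to the shifting normalization $i_0=0$, to one whose sheaf $I$ lies in $\Pic^{(d,0)}(X)$ and which is determined by the finitely many subspaces $V^{(0)},\dots,V^{(d\delta)}$; the remaining $V^{(i)}$ are recovered canonically, the connecting maps being injective in those ranges for degree reasons, so the extension is automatic and compatible in families. It therefore suffices to parameterize tuples $(I;V^{(0)},\dots,V^{(d\delta)})$, with $I\in J$ and the $V^{(i)}$ of common dimension $r+1$, subject to the compatibility conditions of Definition~\ref{def:1}.

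First I would fix a universal invertible sheaf $\I$ on $X\times J$. Such a sheaf exists because $X$ is of compact type with smooth points on each component, so that $J\cong\Pic^d(Y)\times\Pic^0(Z)$ and the Poincaré bundles on the two factors glue along $P$; any residual rigidification ambiguity is harmless and removed by descent. The twist $\delta$-sequence of Definition~\ref{twseq} is built functorially from $I$, its restrictions to $Y$ and $Z$, and the fixed twists by $\O_Y(q_-P)$ and $\O_Z(q_+P)$, so it relativizes to sheaves $\I^{(i)}$ on $X\times J$ together with the maps $\varphi^i$ and $\varphi_i$, for $0\le i\le d\delta$.

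The main technical point I would then address is the jumping of $h^0(X,\I^{(i)}|_X)$ as $I$ varies. I would fix a single effective divisor $D=D_Y+D_Z$ on $X$, supported away from $P$, of degree large enough that $H^1(X,\I^{(i)}(D))=0$ for every member of the bounded family and every $0\le i\le d\delta$. Writing $\pi\colon X\times J\to J$, each $\pi_*(\I^{(i)}(D))$ is then locally free of constant rank with formation commuting with base change, and $\pi_*\I^{(i)}$ is the kernel of the evaluation map $\pi_*(\I^{(i)}(D))\to\pi_*(\I^{(i)}(D)|_D)$ into a vector bundle. On the relative Grassmannian $\mathrm{Gr}_J(r+1,\pi_*(\I^{(i)}(D)))$ of rank-$(r+1)$ subbundles, the requirement that the tautological subbundle $\V^{(i)}$ land inside $\pi_*\I^{(i)}$---that every section vanish along $D$---is the vanishing of its composite into the evaluation bundle, hence closed; and base-change compatibility guarantees that this cuts out exactly the subspaces $V^{(i)}\subseteq H^0(X,\I^{(i)})$ fibrewise.

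Finally I would form the fibre product $\prod_{i=0}^{d\delta}\mathrm{Gr}_J(r+1,\pi_*(\I^{(i)}(D)))$ over $J$ and let $G^r_{d,\delta}(X)$ be the closed subscheme cut out by the above membership conditions together with the compatibilities $\varphi^i(\V^{(i)})\subseteq\V^{(i+1)}$ and $\varphi_i(\V^{(i+1)})\subseteq\V^{(i)}$ for $0\le i<d\delta$. Since the $\varphi$'s are genuine sheaf maps on $X\times J$, each compatibility is expressed as the vanishing of an induced map $\V^{(i)}\to\pi_*(\I^{(i+1)}(D))/\V^{(i+1)}$ into a quotient bundle, again a closed condition. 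As $J$ is projective and Grassmannian bundles are projective, the fibre product and hence $G^r_{d,\delta}(X)$ are projective; that its points are exactly level-$\delta$ limit linear series follows from the fibrewise identification above and the canonical extension of the truncated data. The step I expect to be most delicate is controlling the cohomological jumping uniformly across all $d\delta+1$ twists so that a single $D$ suffices; once that is in place, the compatibility and membership conditions are formally closed and the projectivity is immediate.
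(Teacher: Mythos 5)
Your proposal is correct and takes essentially the same route as the paper's proof (which follows Osserman's Thm.~5.3): both realize $G^r_{d,\delta}(X)$ as a closed determinantal subscheme of a product of relative Grassmannians over $J$, using a sufficiently ample divisor $D$ so that the pushforwards of the twisted sheaves tensored with $\O_X(D)$ are locally free, and imposing exactly the same two sets of closed conditions (vanishing along $D$, i.e.\ membership in the pushforward of the twist, and compatibility under $\varphi^i,\varphi_i$). The only differences are expository: you spell out the gluing construction of the Poincar\'e sheaf and the uniform cohomological vanishing across the finitely many twists, which the paper compresses into ``trivialized at $P$'' and ``ample enough''.
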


\begin{proof} The construction of $G^r_{d,\delta}(X)$ follows the same
  argument given to \cite{Oss1}, Thm.~5.3, p.~1178. To summarize it,
  let $\P$ be the Poincar\'e sheaf on $X\times J$, trivialized at
  $P$. Let $D$ be an ample enough effective Cartier divisor of $X$.
For each $i=0,\dots,d\delta$, let
$$
\P^{(i)}:=p_1^*\O^{(i)}_X\ox\P,\quad
\W^{(i)}:=p_{2*}(p_1^*\O_X(D)\ox\P^{(i)}),\text{ and }
\W_D^{(i)}:=p_{2*}(p_1^*(\O_X(D)|_D)\ox\P^{(i)}),
$$
where $p_1$ and $p_2$ are
the projections of $X\times J$ onto the indicated factors. The
$\O^{(i)}_X$ are the sheaves in the twist $\delta$-sequence associated
to $\O_X$. Let $\varphi_i,\varphi^i$ be the associated maps. They
induce maps $h_i,h^i$ between the $\W^{(i)}$. Also, restriction to $D$
induces maps $w_i\:\W^{(i)}\to\W_D^{(i)}$. The $\W_D^{(i)}$ are
locally free. If $D$ is ample
enough, so are the $\W^{(i)}$. Let
$G_i:=\text{Grass}_J(r+1,\W^{(i)})$ and set
$G:=G_0\times\cdots\times G_{d\delta}$.

Then $G_{d,\delta}^r(X)$ is a determinantal subscheme of $G$. Indeed,
let $\V^{(i)}\subseteq\W^{(i)}\ox\O_G$ be the pullback of the
universal subbundle of $\W^{(i)}$ from $G_i$ to $G$, for each
$i=0,\dots,d\delta$. The first condition we impose is that the compositions
$$
\begin{CD}
\V^{(i+1)}\hookrightarrow\W^{(i+1)}\ox\O_G @>h_i\ox\O_G>>
\W^{(i)}\ox\O_G \to
\frac{\W^{(i)}\ox\O_G}{\V^{(i)}}\\
\V^{(i)}\hookrightarrow\W^{(i)}\ox\O_G @>h^i\ox\O_G>> \W^{(i+1)}\ox\O_G \to
\frac{\W^{(i+1)}\ox\O_G}{\V^{(i+1)}}
\end{CD}
$$
be zero. The second condition is that the compositions
$$
\begin{CD}
\V^{(i)}\hookrightarrow\W^{(i)}\ox\O_G @>w_i\ox\O_G>>
\W^{(i)}_D\ox\O_G
\end{CD}
$$
be zero. These sets of conditions define $G_{d,\delta}^r(X)$.
\end{proof}

The points of $G^r_{d,\delta}(X)$ correspond to level-$\delta$ limit
linear series of $X$ up to shifting. More precisely,
$G^r_{d,\delta}(X)$
represents the functor that associates to each scheme $T$ an
invertible sheaf $\I$ on $X\times T$ of relative degree $d$ over $T$
and a collection of locally free
subsheaves $\V^{(i)}$ of rank $r+1$ of $p_{2*}(\I\ox p_1^*\O_X^{(i)})$
such that the pairs $(\I\ox p_1^*\O_X^{(i)},\V^{(i)})$ are families of
linear series on $X\times T/T$, and such that
$$
p_{2*}(\I\ox p_1^*(\varphi_i))(\V^{(i+1)})\subseteq\V^{(i)}
\text{ and }
p_{2*}(\I\ox p_1^*(\varphi^i))(\V^{(i)})\subseteq\V^{(i+1)},
$$
where the sheaves $\O_X^{(i)}$ and the maps $\varphi_i,\varphi^i$
form a twist $\delta$-sequence associated to $\O_X$, and $p_1$
and
$p_2$ are the projections of $X\times T$ onto the indicated
factors. Furthermore, either we identify the above objects up
to shifting, or we assume that $\I|_{Y\times T}$ has relative degree
$d$ over $T$. We may truncate the collection $(\V^{(i)},\,i\in\mathbb
Z)$ in the appropriate range or not, it does not matter. Finally, we have used the following
definition.

\begin{defn} Let $f\:M\to N$ be a proper flat map of schemes. A
\emph{family of linear series} on $M/N$ is the data of an invertible
sheaf $\L$ on $M$ and a locally free subsheaf $\V\subseteq f_*\L$
such that, for each Cartesian diagram
$$
\begin{CD}
M'@>u'>> M\\
@Vf'VV @VfVV\\
N'@>u>> N,
\end{CD}
$$
the composition of natural maps $u^*\V\to u^*f_*\L\to f'_*(u')^*\L$ is
injective.
\end{defn}

Associated to a level-$\delta$ limit linear series, we have a
collection of vector spaces $(V^{(i)},\,i\in\mathbb Z)$ and maps
$h_i:=\Gamma(\varphi_ i)$ and $h^i:=\Gamma(\varphi^i)$ between
them. These data constitute a \emph{linked sequence of vector spaces},
according to the definition below, following Osserman.

\begin{defn} A \emph{linked sequence of vector spaces} is the data of
  a collection of vector spaces $(V^{(i)},\,i\in\mathbb Z)$ of the
  same dimension and maps
$h_i\:V^{(i+1)}\to V^{(i)}$ and $h^i\:V^{(i)}\to V^{(i+1)}$ satisfying
the following conditions:
\begin{description}
\item (a) $h_ih^i=0$ and $h^ih_i=0$ for every $i$.
\item (b) $\text{Ker}(h^i)\cap\text{Ker}(h_{i-1})=0$ for every $i$.
\item (c) There are integers $i_0$ and $i_\infty$ such that $h_i$ is an
  isomorphism for every $i< i_0$ and $h^i$ is an isomorphism for every
  $i\geq i_\infty$.
\end{description}
The linked sequence is called \emph{exact} if the complex
$$
\begin{CD}
V^{(i)} @>h^i>> V^{(i+1)} @>h_i>> V^{(i)} @>h^i>> V^{(i+1)}
\end{CD}
$$
is exact for every $i$. The \emph{dimension} of the sequence is the
dimension of the $V^{(i)}$. Its \emph{lower bound} (resp.~\emph{upper bound}) is the
maximum $i_0$ (resp.~minimum $i_\infty$) for which $h_i$ (resp.~$h^i$) is
an
isomorphism for every $i<i_0$ (resp.~$i\geq i_\infty$).
\end{defn}

Notice that, if $h_i$ is an isomorphism, so is $h_{i-1}$.
Indeed, since $\Im(h_i)\subseteq\text{Ker}(h^i)$, if $h_i$ is an
isomorphism, then $h^i=0$; whence, since
$\text{Ker}(h^i)\cap\text{Ker}(h_{i-1})=0$, it follows that $h_{i-1}$
is injective, thus an isomorphism. Analogously, if $h^i$ is an
isomorphism, so is $h^{i+1}$. Thus the lower bound (resp.~upper bound)
is the maximum $i$ (resp.~minimum $i$) for which $h_{i-1}$ (resp. $h^i$)
is an isomorphism.

There are many ways to characterize exactness.

\begin{prop}\label{pqm}
Let $(V^{(i)},h^i,h_i\,|\,i\in\mathbb Z)$ be a sequence
  of linked vector spaces of dimension $n$. For each $i$, let
$$
p_i:=\dim\Ker(h_{i-1}),\quad q_i:=\dim\Ker(h^i)\text{ and
}
m_i:=n-p_i-q_i.
$$
Then
\begin{description}
\item {\rm (a)} $p_i+q_i+m_i=n$ for every $i$;
\item {\rm (b)} $p_i,q_i,m_i\geq 0$ for every $i$;
\item {\rm (c)} $(p_i,q_i,m_i)=(0,n,0)$ for every
$i<<0$ and
$(p_i,q_i,m_i)=(n,0,0)$ for every $i>>0$.
\item {\rm (d)} $p_i+m_i\leq p_{i+1}$ for every $i$;
\item {\rm (e)} $q_{i+1}+m_{i+1}\leq q_i$ for every $i$;
\item {\rm (f)} $\sum m_i\leq n$;
\item {\rm (g)} $\rk(h^i)+\rk(h_i)\leq n$ for every $i$.
\end{description}
Furthermore, equalities hold in {\rm (d)} for every $i$ if and only if they
hold in {\rm (e)}, if and only if they hold in {\rm (g)}, if and only
if equality holds in {\rm (f)}, if and only if the
sequence is exact.
\end{prop}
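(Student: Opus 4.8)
The plan is to reduce everything to two elementary observations: the rank--nullity identities $\rk(h^i)=n-q_i$ and $\rk(h_i)=n-p_{i+1}$, and the inclusions $\Im(h^i)\subseteq\Ker(h_i)$ and $\Im(h_i)\subseteq\Ker(h^i)$, which are just restatements of the relations $h_ih^i=0$ and $h^ih_i=0$ in condition (a) of the definition. Item (a) of the proposition is then the definition of $m_i$. For (b), the inequalities $p_i,q_i\geq 0$ are clear, while $m_i\geq 0$ amounts to $p_i+q_i\leq n$; this follows because $\Ker(h_{i-1})$ and $\Ker(h^i)$ are subspaces of $V^{(i)}$ meeting only in $0$ by condition (b), so their dimensions add to at most $n$. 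For (c), I would invoke condition (c) together with the remark preceding the proposition: if $h_i$ is an isomorphism then $h^i=0$, and dually. Thus for $i<i_0$ one gets $h_{i-1}$ an isomorphism and $h^i=0$, forcing $(p_i,q_i,m_i)=(0,n,0)$, and for $i$ past $i_\infty$ one gets the symmetric conclusion $(n,0,0)$.

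For the three inequalities (d), (e), (g), I would feed the two inclusions above into rank--nullity. From $\Im(h^i)\subseteq\Ker(h_i)$ we get $\rk(h^i)\leq p_{i+1}$; since $p_i+m_i=n-q_i=\rk(h^i)$, this is exactly (d). Dually, $\Im(h_i)\subseteq\Ker(h^i)$ gives $\rk(h_i)\leq q_i$, and $q_{i+1}+m_{i+1}=n-p_{i+1}=\rk(h_i)$ turns this into (e). Adding $\rk(h^i)\leq p_{i+1}$ to the identity $\rk(h_i)=n-p_{i+1}$ yields (g). Inequality (f) I would obtain by telescoping: rewriting (d) as $m_i\leq p_{i+1}-p_i$ and summing gives $\sum_{i=-N}^{M}m_i\leq p_{M+1}-p_{-N}$, and letting $N,M\to\infty$ the boundary terms become $0$ and $n$ by (c), so that $\sum m_i\leq n$ (the sum is finite since $m_i=0$ for $|i|$ large).

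The crux is the final chain of equivalences, and here the key bookkeeping observation is that, at a fixed index $i$, equality in each of (d), (e), (g) is literally the \emph{same} linear relation $p_{i+1}+q_i=n$: indeed $p_i+m_i=n-q_i$, $q_{i+1}+m_{i+1}=n-p_{i+1}$, and $\rk(h^i)+\rk(h_i)=2n-q_i-p_{i+1}$, so setting each inequality to an equality produces $p_{i+1}+q_i=n$ in all three cases. This single relation is in turn equivalent to exactness at the two relevant spots of the complex: since $\Im(h^i)\subseteq\Ker(h_i)$ always, one has $\Im(h^i)=\Ker(h_i)$ if and only if $\rk(h^i)=p_{i+1}$, if and only if $p_{i+1}+q_i=n$, and dually $\Im(h_i)=\Ker(h^i)$ if and only if $p_{i+1}+q_i=n$. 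Hence requiring equality in (d), in (e), or in (g) for every $i$ each coincides with exactness of the whole complex. To bring (f) into the chain I would note that the telescoping estimate is an equality precisely when every term $p_{i+1}-p_i-m_i$ vanishes, i.e.\ when equality holds in (d) for every $i$; so equality in (f) is equivalent to the others as well. I expect the only real care needed is in matching indices correctly across the four conditions, since they are phrased at staggered positions, but once the identity $p_{i+1}+q_i=n$ is isolated the equivalences are immediate.
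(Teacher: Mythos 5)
Your proof is correct and follows essentially the same route as the paper's: the inclusions $\Im(h^i)\subseteq\Ker(h_i)$ and $\Im(h_i)\subseteq\Ker(h^i)$ combined with rank--nullity give (a)--(e) and (g), telescoping (d) against the boundary values from (c) gives (f), and the index-by-index identification of the equality cases of (d), (e), (g) (your single relation $p_{i+1}+q_i=n$ is the same bookkeeping as the paper's chain $\rk(h^i)+\rk(h_i)=p_i+m_i+n-p_{i+1}=n-q_i+q_{i+1}+m_{i+1}$) yields the equivalence with exactness at both spots of the complex. No gaps.
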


\begin{proof} Since $\Ker(h_{i-1})\cap\Ker(h^i)=0$, it follows that
  $p_i+q_i\leq n$, yielding the only nontrivial part of (a) and
  (b). Furthermore, (c) follows from the fact that
$h_i$ is an isomorphism for $i<<0$ and $h^i$ is an
isomorphism for $i>>0$.

In addition, for each $i$,
\begin{align*}
p_i+m_i=n-q_i=\dim\Im(h^i)&\leq\dim\Ker(h_i)=p_{i+1},\\
q_{i+1}+m_{i+1}=n-p_{i+1}=\dim\Im(h_i)&\leq\dim\Ker(h^i)=q_i.
\end{align*}
Also,
$$
\rk(h^i)+\rk(h_i)=n-q_i+n-p_{i+1}=p_i+m_i+n-p_{i+1}=n-q_i+q_{i+1}+m_{i+1}.
$$
Thus (d), (e) and (g) follow, as well as the equivalence between the
equalities in (d), (e) or (g) and exactness. Finally, since $p_i=0$ for $i<<0$ and
$p_i=n$ for $i>>0$, it follows from (d) that
$$
\sum m_i\leq\sum(p_{i+1}-p_i)= n,
$$
with equality if and only if equalities hold in (d) for every $i$.
\end{proof}

The above proposition suggests a definition.

\begin{defn} Let $f\:\z\to\z^3$. For each $i\in\z$, let
$(p_i,q_i,m_i):=f(i)$. We say that $f$ is
\emph{$n$-admissible} if Conditions (a)-(f) in
Proposition \ref{pqm} are verified. Furthermore,
we say that $f$ is \emph{exact} if all the
inequalities in (d)-(f) are equalities. If the $p_i,q_i,m_i$ are as in
Proposition \ref{pqm}, we say that $f$ is the
\emph{numerical function} of the sequence of linked
vector spaces.
\end{defn}

Notice that, if $f$ is exact, then the $p_i$ and $q_i$
are determined from the $m_i$. Also,
by Proposition~\ref{pqm},
the numerical function of a sequence of linked vector spaces is exact
if and only if the sequence is exact.

Proposition \ref{pqm} suggests a stratification of
$G^r_{d,\delta}(X)$. Indeed, to each $\mathfrak g\in
G^r_{d,\delta}(X)$ assign the
numerical function $f_{\mathfrak g}$ of the
sequence of linked vector
spaces arising from $\g$. And, for each
$(r+1)$-admissible $f\:\z\to\z^3$, let
$$
G_{d,\delta}^r(X;f):=\left\{\mathfrak{g}\in
  G_{d,\delta}^r(X)\,|\, f_{\mathfrak g}=f\right\}.
$$
Since rank is semicontinuous, $G_{d,\delta}^r(X;f)$ is a locally closed subset of
$G_{d,\delta}^r(X)$.

It follows from Proposition \ref{pqm} that
$\g$ is exact if and only if $f_\g$ is exact. Thus the
subset
$$
G^{r,*}_{d,\delta}(X)\subseteq
G^{r}_{d,\delta}(X),
$$
parameterizing exact level-$\delta$ limit linear
series, decomposes as
$$
G_{d,\delta}^{r,*}(X)=\bigcup_{f\text{ exact}}
G_{d,\delta}^r(X;f).
$$
By semicontinuity, $G_{d,\delta}^{r,*}(X)$ is open
in $G_{d,\delta}^r(X)$. Furthermore,
the $G_{d,\delta}^r(X;f)$, for $f$ exact, are both open and
closed in $G_{d,\delta}^{r,*}(X)$.


\section{The forgetful maps}

As in Section \ref{Tw}, let $X$ denote a
curve defined over an algebraically closed field
$k$ with exactly two irreducible components, $Y$
and $Z$, that are smooth and intersect transversally at a single point
$P$. 
Let $d$ and $r$ be integers.

There are natural ``forgetful'' morphisms
$$
\rho_{\delta',\delta}\:G_{d,\delta'}^r(X)\longrightarrow G_{d,\delta}^r(X)
$$
for $\delta$ and $\delta'$ such that $\delta|\delta'$. Indeed, if $I$
is an invertible sheaf on $X$ and
  $(I^{(i)},\varphi^i,\varphi_i\,|\,i\in\mathbb Z)$ is the
twist $\delta'$-sequence associated to $I$, then
$(I^{(ci)},\varphi_{ci}^{c(i+1)},\varphi_{c(i+1)}^{ci}\,|\,i\in\mathbb Z)$ is the twist
$\delta$-sequence associated to $I$, where $c:=\delta'/\delta$ and
\begin{equation}\label{fij}
\varphi_i^j:=\begin{cases}
\varphi^{j-1}\cdots\varphi^{i+1}\varphi^{i}&\text{if $j>i$,}\\
\varphi_j\varphi_{j+1}\cdots\varphi_{i-1}&\text{if $j<i$,}\\
\text{id}&\text{if $j=i$.}
\end{cases}
\end{equation}
Thus $\rho_{\delta',\delta}$ is well-defined by taking a
level-$\delta'$ limit linear series $\mathfrak
g'=(I;V^{(i)},i\in\mathbb Z)$ to the level-$\delta$ limit linear series
$\mathfrak g=(I;V^{(ci)},i\in\mathbb Z)$.

Clearly, the
forgetful maps satisfy
$$
\rho_{\delta',\delta}\rho_{\delta'',\delta'}=\rho_{\delta'',\delta}
\text{ if }\delta|\delta'|\delta''.
$$
Furthermore,
\begin{equation}\label{fcf}
\rho_{\delta',\delta}(G_{d,\delta'}^r(X;f'))\subseteq
G_{d,\delta}^r(X,f),\text{ where }f(i)=f'(ci)\text{ for every }i.
\end{equation}
Indeed, for any $\mathfrak g'=(I;V^{(i)},i\in\mathbb Z)$,
the kernel of $\varphi_{i-1}$ is the same as that
of the composition $\varphi_i^j$ for
every $j<i$. Likewise, the kernel of $\varphi^{i}$ is the same as that
of the composition $\varphi_i^j$ for
every $j>i$. Thus:
\begin{equation}\label{fcf2}
\text{If $\mathfrak g=\rho_{\delta',\delta}(\mathfrak
g')$, then $f_{\mathfrak g}(i)=f_{\mathfrak g'}(ci)$ for every $i$.}
\end{equation}

Given $f,f'\colon\mathbb Z\to\mathbb Z^3$ and
$c\in\mathbb Z$,
we write $f=f'c$ when
$f(i)=f'(ci)$ for every $i$.

\begin{thm}\label{p2} Let
$f\:\mathbb Z\to\mathbb Z^3$ be $(r+1)$-admissible.
If $\delta'=c\delta$ then
$$
\rho_{\delta',\delta}^{-1}(G_{d,\delta}^{r}(X;f))=
\bigcup_{f=f'c}G_{d,\delta'}^{r}(X;f').
$$
Furthermore, if $f=f'c$, and $f'$ is $(r+1)$-admissible,
then the restriction
$$
\rho_{\delta',\delta}\colon G_{d,\delta'}^{r}(X;f')\longrightarrow
G_{d,\delta}^{r}(X;f)
$$
is surjective and $G_{d,\delta}^r(X;f)$-isomorphic to a nonempty open
subscheme of a product of relative Grassmannians over
a product of relative partial flag varieties over
$G_{d,\delta}^{r}(X;f)$, and has relative dimension
\begin{align*}
\sum_{i\in\z}\Big(&\sum_{j=1}^{c-1}
(q_{ci+j-1}-q_{ci+j})(p_{ci+c}-p_{ci+j}-m_{ci+j})+\\
&\sum_{j=1}^{c-1}
(p_{ci+j+1}-p_{ci+j})(q_{ci}-q_{ci+j}-m_{ci+j})+\\
&\sum_{j=1}^{c-1}
m_{ci+j}(p_{ci+j+1}-p_{ci+j-1}-m_{ci+j-1})\Big),
\end{align*}
where $(p_i,q_i,m_i):=f'(i)$ for each $i\in\z$. In
particular, if $f'$ is exact, the relative dimension is
$$
\sum_{i\in\z}(m_{ci+1}+\cdots+m_{ci+c-1})^2
$$
\end{thm}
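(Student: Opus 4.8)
The set-theoretic decomposition $\rho_{\delta',\delta}^{-1}(G_{d,\delta}^r(X;f))=\bigcup_{f=f'c}G_{d,\delta'}^r(X;f')$ is immediate from \eqref{fcf2}: if $\mathfrak g=\rho_{\delta',\delta}(\mathfrak g')$ then $f_{\mathfrak g}=f_{\mathfrak g'}c$, so $\mathfrak g'$ lies in the preimage precisely when $f_{\mathfrak g'}$ is one of the (finitely many) functions $f'$ with $f=f'c$, and these strata are disjoint. The content is therefore the description of each restriction, which I would obtain by analysing the fibre of $\rho_{\delta',\delta}$ over a fixed $\mathfrak g=(I;W^{(i)})\in G_{d,\delta}^r(X;f)$ and then globalising over $G_{d,\delta}^r(X;f)$ with the universal data of Proposition~\ref{p1}. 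A point of the fibre is a filling-in of the missing spaces $V^{(j)}$, $j\not\equiv 0\pmod c$, with $V^{(ci)}=W^{(i)}$, subject to the level-$\delta'$ linking. Since the linking relates only consecutive indices, once the $V^{(ci)}$ are fixed the conditions decouple over the intervals $[ci,c(i+1)]$; thus the fibre is a product over $i\in\z$ of local spaces $F_i$, and it suffices to treat one interval, relabelling its indices $0,\dots,c$ and writing $(p_l,q_l,m_l):=f'(ci+l)$.

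The decisive point is the shape of the twist $\delta'$-sequence inside the interval. No interior index $ci+l$, $0<l<c$, is divisible by $\delta'$, so each interior $I^{(ci+l)}|_X$ splits as $A\oplus B$ with $A=I|_Y\ox\O_Y(q_-P)$ and $B=I|_Z\ox\O_Z(q_+P)$, and the bidegree is constant across the interior. As $A$ and $B$ are supported on the distinct components $Y$ and $Z$, one has $\mathrm{Hom}_{\O_X}(A,B)=\mathrm{Hom}_{\O_X}(B,A)=0$, so every interior map is diagonal; the defining diagram \eqref{propcel6} fixes the diagonal entries, and $\varphi^{ci+l}\varphi_{ci+l}=\varphi_{ci+l}\varphi^{ci+l}=0$ forces the interior links to be the two projections $\pi_A,\pi_B$ of $\Gamma(A)\oplus\Gamma(B)$ onto its summands. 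Writing $K_l:=V^{(l)}\cap\Gamma(A)$, $J_l:=V^{(l)}\cap\Gamma(B)$, $A'_l:=\pi_A(V^{(l)})$ and $B'_l:=\pi_B(V^{(l)})$, the conditions $\pi_A(V^{(l)})\subseteq V^{(l+1)}$ and $\pi_B(V^{(l+1)})\subseteq V^{(l)}$ become a single increasing flag $\cdots\subseteq K_l\subseteq A'_l\subseteq K_{l+1}\subseteq\cdots$ in $\Gamma(A)$ and a single decreasing flag $\cdots\subseteq B'_{l+1}\subseteq J_l\subseteq B'_l\subseteq\cdots$ in $\Gamma(B)$, whose step dimensions are read off from $(p_l,q_l,m_l)$; the inequalities (d),(e) of Proposition~\ref{pqm} are exactly what makes these flags exist. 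Each $V^{(l)}$ is in turn recovered from the two flags together with a linking isomorphism $\sigma_l\colon A'_l/K_l\xrightarrow{\sim}B'_l/J_l$ of $m_l$-dimensional spaces. The only non-projection maps are the two boundary transitions (node-aware when the endpoint sheaf is invertible); since $W^{(i)}$ and $W^{(i+1)}$ are fixed, these impose fixed linear conditions bounding the two flags at their ends, governed by the endpoint data $f(i)=f'(ci)$ and $f(i+1)=f'(c(i+1))$.

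This presents $F_i$, and hence the whole restriction, as an open subscheme of a tower over $G_{d,\delta}^r(X;f)$: the two partial flags give a point of a product of relative partial flag varieties, and the linking isomorphisms $\sigma_l$ then give a point of a product of relative Grassmannians, each $\mathrm{Isom}$ space being the open locus of isomorphisms (graphs meeting both summands transversally) inside a Grassmannian. The only open conditions are that the $\sigma_l$ be isomorphisms, which is also exactly the requirement that the numerical type equal $f'$; nonemptiness of all factors, hence surjectivity, follows from the $(r+1)$-admissibility of $f'$. The relative dimension is $\sum_i\dim F_i$, and $\dim F_i$ is the sum of the two flag-variety dimensions and $\sum_l m_l^2$ from the $\sigma_l$; a direct count organises these into the three displayed sums, the first two coming from the two flags and the third from the linking isomorphisms. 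When $f'$ is exact, (d) and (e) become equalities, so $A'_l=K_{l+1}$ and $B'_{l+1}=J_l$, the two flags reduce to $K_0\subseteq\cdots\subseteq K_c$ and $J_c\subseteq\cdots\subseteq J_0$, and the three sums telescope to $\sum_i(m_{ci+1}+\cdots+m_{ci+c-1})^2$.

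The main obstacle I anticipate is not the pointwise count but the scheme-theoretic statement in families. One must produce locally free sheaves over $G_{d,\delta}^r(X;f)$ playing the roles of $\Gamma(A)$ and $\Gamma(B)$ (twisting by $\O_X(D)$ as in Proposition~\ref{p1} to secure local freeness on the stratum), build the relative flag and Grassmannian bundles over them, and verify that the incidence and linking conditions defining $G_{d,\delta'}^r(X;f')$ cut out precisely the asserted open subscheme. The delicate parts are controlling the node-aware boundary maps in families, so that they vary locally freely and impose the correct fixed bounding subspaces on the stratum, and checking that admissibility of $f'$ matches the nonempty open locus of the tower uniformly over the base.
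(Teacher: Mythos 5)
Your proposal is correct and follows essentially the same route as the paper's proof: the first statement from \eqref{fcf2}, then a fiberwise analysis that decouples over the intervals $[ci,c(i+1)]$, identifies the interior twist maps as the two component projections, bounds flags in the two component pieces by the endpoint data (the paper's $V^{(ci)}_{\pm}$ and $W_{\pm}$ spaces), and realizes the fiber as a nonempty open subscheme of Grassmannians over partial flag varieties, with the same collapse in the exact case. Your Goursat-style parameterization (intersections $K_l,J_l$, images $A'_l,B'_l$, and linking isomorphisms $\sigma_l$ viewed as graphs in a Grassmannian) is only a mild reshuffling of the paper's (image flags $A_{i,j},B_{i,j}$ plus a choice of $V^{(ci+j)}$ in a Grassmannian of a quotient), and both give the stated dimension count.
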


\begin{proof} The first statement follows from
  \eqref{fcf2}. We prove now the second statement.

Let $(p_i,q_i,m_i):=f'(i)$ for each $i\in\z$.
Let $I$ be an invertible sheaf on $X$ and
  $(I^{(i)},\varphi^i,\varphi_i\,|\,i\in\mathbb Z)$ the
twist $\delta'$-sequence associated to $I$. Then
$(I^{(ci)},\varphi^{c(i+1)}_{ci},\varphi_{c(i+1)}^{ci}\,|\,i\in\z)$ is the twist
$\delta$-sequence associated to $I$.
For each $i\in\z$,
let $I^{(i)}_Y$ (resp.~$I^{(i)}_Z$) denote the
restriction of $I^{(i)}$ to $Y$ (resp.~$Z$) modulo torsion. Then
$$
I^{(ci+j)}=I^{(c(i+1))}_Y\oplus I^{(ci)}_Z
$$
for each $j=1,\dots,c-1$.

For each $i\in\mathbb Z$, and each subspace
$V\subseteq \Gamma(X,I^{(i)})$ of dimension $r+1$, let
$h_-^V:=\varphi_{i-1}|_{V}$ and
$h_+^V:=\varphi^i|_{V}$, set
\begin{align*}
p(V):=&\dim\Ker(h_-^V),\\
q(V):=&\dim\Ker(h_+^V),\\
m(V):=&r+1-p(V)-q(V),
\end{align*}
and let $V_-$
(resp.~$V_+$) denote the image of $V$ in
$\Gamma(Y,I^{(i)}_Y)$ (resp.~$\Gamma(Z,I^{(i)}_Z)$).

For each $i\in\z$, let
$V^{(ci)}\subseteq\Gamma(X,I^{(ci)})$ be a subspace
such that
$\mathfrak{g}:=(I; V^{(ci)},\,i\in\mathbb Z)$ is a
level-$\delta$ limit linear series of rank $r$ of
$X$ with $f_\g=f$. Since
the kernel of $\varphi^i$ is the same as that of
$\varphi^j_i$ for every $j>i$, and likewise for
$\varphi_{i-1}$, it follows that
$$
f(i)=(p(V^{(ci)}),q(V^{(ci)}),m(V^{(ci)}))\text{ for every }i\in\z.
$$
Also,
for each $i\in\mathbb Z$, let $W^{(ci)}_+$
(resp.~$W^{(c(i+1))}_-$) be the subspace of
$\Gamma(Z,I^{(ci)}_Z)$ (resp.~$\Gamma(Y,I^{(c(i+1))}_Y)$)
such that
$$
\varphi^{ci+c-1}(0\oplus W^{(ci)}_+)=\Ker(h_-^{V^{(c(i+1))}})
\quad\text{(resp.~ }
\varphi_{ci}(W^{(c(i+1))}_-\oplus 0)=\Ker(h_+^{V^{(ci)}})\text{)}.
$$
Then $V^{(ci)}_+\subseteq W^{(ci)}_+$
and $V^{(c(i+1))}_-\subseteq W^{(c(i+1))}_-$.

If $\mathfrak{g'}:=(I;
V^{(i)},\,i\in\mathbb Z)$ is a level-$\delta'$ limit linear series of
rank $r$ such that $f_{\g'}=f'$ and
$\mathfrak g=\rho_{\delta',\delta}(\mathfrak g')$, then,
for each $i\in\z$,
\begin{align}
&V^{(ci)}_+\subseteq V^{(ci+1)}_+\subseteq\cdots\subseteq
V^{(c(i+1)-1)}_+\subseteq W^{(ci)}_+,\\
&V^{(c(i+1))}_-\subseteq V^{(ci+c-1)}_-\subseteq\cdots\subseteq
V^{(ci+1)}_-\subseteq W^{(c(i+1))}_-.
\end{align}
Notice that
$$
\dim V_+^{(ci+j)}=r+1-q_{ci+j}\text{ and }
\dim V_-^{(ci+j)}=r+1-p_{ci+j}
$$
for $j=1,\dots,c-1$. Also,
$$
V^{(ci+j+1)}_-\oplus V^{(ci+j-1)}_+\subseteq
V^{(ci+j)}\subseteq V^{(ci+j)}_-\oplus V^{(ci+j)}_+
\subseteq\Gamma(Y,I_Y^{(c(i+1))})\oplus
\Gamma(Z,I_Z^{(ci)}).
$$
Furthermore, the
projections $V^{(ci+j)}\to V^{(ci+j)}_-$ and
$V^{(ci+j)}\to V^{(ci+j)}_+$ are surjective.

Conversely, for each $i\in\z$, let
$A_{i,1},\dots,A_{i,c-1}$ and $B_{i,1},\dots,B_{i,c-1}$
be subspaces of $W^{(ci)}_+$ and
$W^{(ci+c)}_-$, respectively, with
$$
\dim A_{i,j}=r+1-q_{ci+j}\text{ and }
\dim B_{i,j}=r+1-p_{ci+j}\text{ for }j=1,\dots,c-1,
$$
such that
\begin{align*}
&V^{(ci)}_+\subseteq A_{i,1}\subseteq\cdots\subseteq
A_{i,c-1}\subseteq W^{(ci)}_+,\\
&V^{(c(i+1))}_-\subseteq B_{i,c-1}\subseteq\cdots\subseteq
B_{i,1}\subseteq W^{(c(i+1))}_-.
\end{align*}
This is possible since
\begin{align*}
\dim V^{(ci)}_+&=r+1-q_{ci}\leq r+1-q_{ci+j}\leq
r+1-q_{ci+j+1}\leq p_{ci+c}=\dim W^{(ci)}_+\\
\dim V^{(c(i+1))}_-&=r+1-p_{ci+c}\leq r+1-p_{ci+c-j}\leq
r+1-p_{ci+c-j-1}\leq q_{ci}=\dim W^{(ci+c)}_-
\end{align*}
for $j=1,\dots,c-2$. For each $j=1,\dots,c-1$, let
$V^{(ci+j)}\subseteq B_{i,j}\oplus A_{i,j}$ be any subspace
of dimension $r+1$ such that
$V^{(ci+j)}\supseteq B_{i,j+1}\oplus A_{i,j-1}$ and
such that the
projections $V^{(ci+j)}\to A_{i,j}$ and $V^{(ci+j)}\to B_{i,j}$
are surjective. This is possible, since
$$
\dim(B_{i,j+1}\oplus A_{i,j-1})=2(r+1)-p_{ci+j+1}-
q_{ci+j-1}\leq r+1\leq 2(r+1)-p_{ci+j}-q_{ci+j}=
\dim(B_{i,j}\oplus A_{i,j}).
$$
Then $\mathfrak{g'}:=(I;
V^{(i)},\,i\in\mathbb Z)$ is a level-$\delta'$ limit linear
series of rank $r$ such that $f_{\g'}=f'$ and
$\mathfrak g=\rho_{\delta',\delta}(\mathfrak g')$.

If follows that
$\rho_{\delta',\delta}^{-1}(\mathfrak g)\cap
G_{d,\delta'}^r(X; f')$ is parameterized by a nonempty
open subset of a product of Grassmannians over a
product of flag varieties. The flag varieties are
\begin{align*}
F_+^{(i)}:=
&\{\ol A_{i,1}\subseteq\cdots\subseteq
\ol A_{i,c-1}\subseteq
\frac{W^{(ci)}_+}{V^{(ci)}_+}\,|\,
\dim\ol A_{i,j}=q_{ci}-q_{ci+j}\text{ for }j=1,\dots,c-1\},\\
F_-^{(i)}:=&\{\ol B_{i,c-1}\subseteq\cdots\subseteq
\ol B_{c,1}\subseteq
\frac{W^{(c(i+1))}_-}{V^{(c(i+1))}_-}\,|\,
\dim\ol B_{i,c-j}=
p_{c(i+1)}-p_{c(i+1)-j}\text{ for }j=1,\dots,c-1\}.
\end{align*}
And the Grassmannians over points
$(\ol A_{i,j})\in F^{(i)}_+$ and $(\ol B_{i,c-j})\in F^{(i)}_-$
are
$$
\text{Grass}
\Big(p_{ci+j+1}+q_{ci+j-1}-(r+1),
\frac{\ol B_{i,j}}{\ol B_{i,j+1}}\oplus
\frac{\ol A_{i,j}}{\ol A_{i,j-1}}
\Big).
$$
The relative dimension can
thus be computed from the formulas for the dimensions of relative flag varieties.
\end{proof}

\begin{lem}\label{lfff}
Let $f\:\z\to\z^3$ be an $n$-admissible function. Let $c>1$ be an
integer. Then there
is an exact $n$-admissible function
$f'$ such that $f=f'c$. If $f$ is exact, then there is a
unique $n$-admissible function
$f'$ such that $f=f'c$.
\end{lem}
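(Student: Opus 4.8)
The plan is to exploit the remark made right after the definition of exactness: an exact $n$-admissible function is completely determined by its sequence of $m$-values. Indeed, if $f'$ is exact then equality in (d) and (e) gives the recursions $p'_{j+1}=p'_j+m'_j$ and $q'_j=q'_{j+1}+m'_{j+1}$, which together with the boundary behaviour (c) integrate to
\begin{equation*}
p'_j=\sum_{k<j}m'_k\qquad\text{and}\qquad q'_j=\sum_{k>j}m'_k .
\end{equation*}
Conversely, I would start from \emph{any} sequence of integers $m'_j\ge 0$ with finite support and $\sum_j m'_j=n$, define $p'_j,q'_j$ by these formulas, and check directly that (a)--(f) then hold with equality throughout. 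So I will use that exact $n$-admissible functions are precisely those arising from such an $m$-sequence, and the problem becomes a purely numerical one of prescribing the $m'_j$.

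For the existence statement I would set $m'_{ci}:=m_i$ for every $i$ and, within each block, distribute the integer $p_{i+1}-p_i-m_i$ arbitrarily among the $c-1$ slots $m'_{ci+1},\dots,m'_{ci+c-1}$ (for instance placing it all in $m'_{ci+1}$). Condition (d) for $f$ is exactly the statement that $p_{i+1}-p_i-m_i\ge 0$, so this can be done with non-negative entries. A telescoping computation then gives $\sum_{k<ci}m'_k=\sum_{i'<i}(p_{i'+1}-p_{i'})=p_i$, whence $p'_{ci}=p_i$ and $q'_{ci}=n-p_i-m_i=q_i$ by (a) for $f$; summing over all blocks yields $\sum_j m'_j=\sum_{i'}(p_{i'+1}-p_{i'})=n$. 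Finite support of $(m'_j)$, hence condition (c), follows because (c) for $f$ forces $p_{i+1}-p_i-m_i=0$ once $|i|$ is large. Thus $f'$ is exact, $n$-admissible, and satisfies $f=f'c$.

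For the uniqueness statement, suppose $f$ is exact and let $f'$ be any $n$-admissible function, \emph{not} assumed exact, with $f=f'c$. By (d) and (b) the sequence $(p'_j)$ is non-decreasing, so $p_i=p'_{ci}\le p'_{ci+1}\le\cdots\le p'_{c(i+1)}=p_{i+1}$; on the other hand (d) at $j=ci$ gives $p'_{ci+1}\ge p'_{ci}+m'_{ci}=p_i+m_i=p_{i+1}$, the last equality being exactness of $f$. Hence $p'_{ci+j}=p_{i+1}$ for $j=1,\dots,c-1$, so (d) forces $m'_{ci+j}=0$ in that range, and finally (a) gives $q'_{ci+j}=n-p_{i+1}=q_i$. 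Therefore every admissible $f'$ with $f=f'c$ is forced to take the values $m'_{ci}=m_i$, $m'_{ci+j}=0$ and the $p',q'$ just computed, so $f'$ is unique (and automatically exact).

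I do not expect any genuine difficulty here; the one step requiring a little care is the uniqueness argument, where I must combine the \emph{inequalities} (d), (b) available for the a priori merely admissible $f'$ with the \emph{equalities} coming from exactness of $f$. The mechanism that collapses each block is the monotonicity of $(p'_j)$ squeezed between the two endpoint values $p_i$ and $p_{i+1}=p_i+m_i$ across a single interval $[ci,c(i+1)]$.
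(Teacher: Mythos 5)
Your proof is correct and takes essentially the same route as the paper: existence by prescribing the $m'$-sequence (putting $m_i$ at position $ci$ and the defect $p_{i+1}-p_i-m_i$ inside the block, which is nonnegative by (d)), and uniqueness by forcing the interior $m'$-values to vanish, using that exact admissible functions are determined by their $m$-values. The only cosmetic difference is in the uniqueness step, where the paper kills the interior $m'$-values by the counting argument $\sum_j m'_j \le n = \sum_i m'_{ci}$ coming from condition (f), whereas you use the monotonicity squeeze on $(p'_j)$ between $p_i$ and $p_{i+1}=p_i+m_i$; both amount to the same rigidity.
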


\begin{proof}  Let $(p_i,q_i,m_i):=f(i)$ for
each $i\in\z$. Let $\ell\in\{1,\dots,c-1\}$. Set $f'(ci):=f(i)$
for each $i$ and
\begin{equation}\label{fff}
f'(ci+j):=\begin{cases}
(p_i+m_i,q_i,0)&\text{ if $j<\ell$},\\
(p_{i}+m_{i},q_{i+1}+m_{i+1},p_{i+1}-p_{i}-m_{i})&\text{ if
  $j=\ell$},\\
(p_{i+1},q_{i+1}+m_{i+1},0)&\text{ if $j>\ell$}
\end{cases}
\end{equation}
for each $i$ and $j=1,\dots,c-1$; then
$f'$ is $n$-admissible, is exact and $f=f'c$.

On the other hand, if $f$ is exact and $f'$
is an $n$-admissible function such that $f=f'c$, then, since
$\sum m_i=n$, the maximum possible, it follows that
$f'(ci+j)\in\z^2\times\{0\}$ for $j=1,\dots,c-1$, and
that $f'$ is exact. Now, since $f'$ is exact, the
knowledge of the $m_i$ determines $f'$ uniquely.
\end{proof}

\begin{prop}\label{Cor1} If $\delta|\delta'$ then
$$
\rho_{\delta',\delta}^{-1}(G_{d,\delta}^{r,*}(X))\subseteq
G_{d,\delta'}^{r,*}(X),
$$
and the restriction
$$
\rho_{\delta',\delta}\colon \rho_{\delta',\delta}^{-1}(G_{d,\delta}^{r,*}(X))
\longrightarrow
G_{d,\delta}^{r,*}(X)
$$
is an isomorphism. Furthermore, if $\delta'>\delta$, then
$$
\rho_{\delta',\delta}(G_{d,\delta'}^{r,*}(X))=
G_{d,\delta}^{r}(X).
$$
\end{prop}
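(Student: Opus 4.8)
The plan is to establish the three assertions of Proposition~\ref{Cor1} by leveraging Theorem~\ref{p2} and Lemma~\ref{lfff}, translating each statement about the exact loci into a statement about the numerical functions $f$ and the fibers of $\rho_{\delta',\delta}$. Throughout I write $c:=\delta'/\delta$.

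First I would prove the inclusion $\rho_{\delta',\delta}^{-1}(G_{d,\delta}^{r,*}(X))\subseteq G_{d,\delta'}^{r,*}(X)$. Take $\mathfrak g'\in G_{d,\delta'}^r(X)$ with $\mathfrak g:=\rho_{\delta',\delta}(\mathfrak g')$ exact, and set $f:=f_{\mathfrak g}$ and $f':=f_{\mathfrak g'}$. By \eqref{fcf2} we have $f=f'c$. Since $\mathfrak g$ is exact, $f$ is exact by the remark following Proposition~\ref{pqm}. Now Lemma~\ref{lfff} tells us that when $f$ is exact, the only $(r+1)$-admissible $f'$ satisfying $f=f'c$ is itself exact. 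Hence $f'$ is exact, so $\mathfrak g'$ is exact, giving the desired containment.

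Next I would show that the restricted map $\rho_{\delta',\delta}\colon\rho_{\delta',\delta}^{-1}(G_{d,\delta}^{r,*}(X))\to G_{d,\delta}^{r,*}(X)$ is an isomorphism. It suffices to show it is bijective on points with trivial fiber structure and then invoke Theorem~\ref{p2} for the scheme-theoretic statement. Fix an exact $f$; by Lemma~\ref{lfff} there is a \emph{unique} exact (equivalently, $(r+1)$-admissible) $f'$ with $f=f'c$. Then by the first statement of Theorem~\ref{p2}, $\rho_{\delta',\delta}^{-1}(G_{d,\delta}^r(X;f))$ equals $G_{d,\delta'}^r(X;f')$ for that single $f'$, and the restriction $\rho_{\delta',\delta}\colon G_{d,\delta'}^r(X;f')\to G_{d,\delta}^r(X;f)$ is, by the second statement of Theorem~\ref{p2}, isomorphic over $G_{d,\delta}^r(X;f)$ to an open subscheme of a product of Grassmannians over flag varieties. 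The key computation is that for exact $f'$ the relative dimension formula collapses: since $f'$ exact forces $m_{ci+j}=0$ for all $j=1,\dots,c-1$, the closed-form expression $\sum_i(m_{ci+1}+\cdots+m_{ci+c-1})^2$ vanishes, so the fibers are single reduced points and the map is an isomorphism over each exact stratum. Taking the union over all exact $f$ and using that these strata are open and closed in the exact locus yields the global isomorphism.

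Finally I would prove surjectivity of $\rho_{\delta',\delta}(G_{d,\delta'}^{r,*}(X))=G_{d,\delta}^r(X)$ when $\delta'>\delta$, i.e.\ $c>1$. Given any $\mathfrak g\in G_{d,\delta}^r(X)$ with numerical function $f:=f_{\mathfrak g}$, which is $(r+1)$-admissible, the first part of Lemma~\ref{lfff} (the part requiring $c>1$) produces an \emph{exact} $(r+1)$-admissible $f'$ with $f=f'c$. Then by the surjectivity assertion in Theorem~\ref{p2}, the restriction $\rho_{\delta',\delta}\colon G_{d,\delta'}^r(X;f')\to G_{d,\delta}^r(X;f)$ is surjective, so $\mathfrak g$ lies in the image of $G_{d,\delta'}^r(X;f')\subseteq G_{d,\delta'}^{r,*}(X)$. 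As $\mathfrak g$ was arbitrary, the image covers all of $G_{d,\delta}^r(X)$. I expect the main subtlety to be the isomorphism claim rather than surjectivity: one must argue carefully that the open subscheme of the (trivial, zero-dimensional) relative Grassmannian-over-flag-variety is not merely a single point set-theoretically but that the map is an isomorphism of schemes, which follows because the relative dimension vanishes and the construction in Theorem~\ref{p2} is compatible with base change, so the universal fiber is reduced and the bijective morphism between the exact loci is in fact an isomorphism.
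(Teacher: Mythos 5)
Your overall strategy is exactly the paper's: combine Theorem~\ref{p2} with Lemma~\ref{lfff}, working stratum by stratum on the exact loci, and all three assertions are organized the same way. The inclusion and surjectivity arguments are correct as written.

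There is, however, one incorrect justification in the isomorphism step. You claim that ``$f'$ exact forces $m_{ci+j}=0$ for all $j=1,\dots,c-1$,'' and derive the vanishing of the relative dimension $\sum_i(m_{ci+1}+\cdots+m_{ci+c-1})^2$ from that. This implication is false in general: an exact $f'$ with $f=f'c$ can perfectly well have $m_{ci+j}>0$ for some $j\not\equiv 0\pmod c$. Indeed, the exact functions built in the first part of Lemma~\ref{lfff} via \eqref{fff} have $m'_{ci+\ell}=p_{i+1}-p_i-m_i$, which is positive whenever $f$ fails to be exact at $i$; this is precisely what makes your surjectivity step (where $f$ is an arbitrary admissible function) nontrivial. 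What actually kills the off-multiple $m$'s in the isomorphism step is the exactness of $f$, not of $f'$: since $f$ is exact, $\sum_i m'_{ci}=\sum_i m^{f}_i=r+1$, while admissibility of $f'$ gives $\sum_j m'_j\le r+1$; hence $m'_j=0$ for every $j$ not divisible by $c$. This is the computation inside the proof of Lemma~\ref{lfff}, and it is why the paper attributes the vanishing of the relative dimension to the exactness of $f$. With this one-line repair the relative dimension is indeed zero, and the rest of your argument --- that a relative-dimension-zero product of Grassmannians over flag varieties is the base itself, so a nonempty open subscheme of it surjecting onto the base must be everything --- goes through and matches the paper's reasoning.
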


\begin{proof} It follows from Lemma \ref{lfff} and
the first statement of
Theorem \ref{p2} that, if $f$ is exact, then
$\rho_{\delta',\delta}^{-1}(G_{d,\delta}^{r}(X; f))
=G_{d,\delta'}^{r}(X; f')$, where $f'$ is the exact
function such that $f=f'c$, where
$c:=\delta'/\delta$.  This is enough to conclude
that $\rho_{\delta',\delta}^{-1}(G_{d,\delta}^{r,*}(X))\subseteq
G_{d,\delta'}^{r,*}(X)$, since
the $G_{d,\delta}^{r}(X; f)$ for $f$ exact cover $G_{d,\delta}^{r,*}(X)$ and
the $G_{d,\delta'}^{r}(X; f')$ for $f'$ exact cover $G_{d,\delta'}^{r,*}(X)$.

To prove the remaining of the first statement, since the
$G_{d,\delta}^{r}(X; f)$ for $f$ exact decompose
$G_{d,\delta}^{r,*}(X)$ into open subsets, it is enough to show that
the restriction 
$$
\rho_{\delta',\delta}\colon G_{d,\delta'}^{r}(X;f')
\longrightarrow G_{d,\delta}^{r}(X;f)
$$
is an isomorphism for every
exact $f$, where $f'$ is the unique exact function such that
$f=f'c$. By Theorem \ref{p2}, it is enough to show that the relative
dimension of the latter map is zero in this case. This relative
dimension is computed in Theorem \ref{p2} and turns out to be zero
because $f$ is exact.

To prove the second statement, by Theorem \ref{p2},
it is enough to show
that for each $(r+1)$-admissible function
$f\:\z\to\z^3$ there is an exact $(r+1)$-admissible
function $f'$ such that $f=f'c$, where
$c:=\delta'/\delta$. But this is exactly what
Lemma \ref{lfff} claims.
\end{proof}

\section{Abel maps}

\begin{defn}
Let $\mathfrak V:=(V^{(i)},h^i,h_i\,|\,i\in\z)$ be a sequence of linked vector
  spaces of dimension $n$. Let $m\in\z$. The \emph{elementary truncation at $m$} of
  $\mathfrak V$ is the sequence of linked vector spaces
$\mathfrak W:=(W^{(i)},f^i,f_i\,|\,i\in\z)$ where
\begin{description}
\item {\rm (a)} $W^{(i)}=V^{(i)}$ for $i\leq m$ and $W^{(i)}=V^{(i+1)}$
    for $i> m$;
\item {\rm (b)} $f^i=h^i$ for $i<m$ $f^m=h^{m+1}h^m$ and $f^i=h^{i+1}$ for
  $i>m$;
\item {\rm (c)} $f_i=h_i$ for $i<m$ $f_m=h_mh_{m+1}$ and $f_i=h_{i+1}$ for
  $i>m$.
\end{description}
A \emph{truncation} is the sequence of linked vector spaces obtained
after a finite sequence of elementary truncations. On the other
hand, we say that a sequence of linked vector spaces is an
\emph{expansion} of another, if the latter is a truncation of the former.
\end{defn}

Let $\mathfrak V:=(V^{(i)},h^i,h_i\,|\,i\in\z)$ be a sequence of linked vector
  spaces of dimension $n$. Let $i_0$ be its lower bound and
  $i_{\infty}$ its upper bound. Let $i\leq i_0$ and $j\geq
  i_\infty$. Put $\IP^{1,1}(\mathfrak V):=\mathbb P(V^{(i)})\times\mathbb P(V^{(j)})$.
In principle, $\IP^{1,1}(\mathfrak V)$
is defined up to the choice of $i$ and $j$. However, since $h^\ell$ is
an isomorphism for $\ell\geq i_\infty$ and $h_\ell$ is an isomorphism
for $\ell\leq i_0-1$, the scheme is the same up to natural
isomorphism.

Let
$$
\mathbb P(\mathfrak V):=\bigcup_{\ell=i}^j\IP(\mathfrak
V_\ell)\subseteq \IP^{1,1}(\mathfrak V),
$$
where
$$
\IP(\mathfrak V_\ell):=
\ol{\{([h_\ell^i(v)],[h_\ell^j(v)])\in\mathbb P^{1,1}(\mathfrak V)\,|\,v\in
  V^{(\ell)}-(\Ker(h^{\ell})\cup\Ker(h_{\ell-1}))\}}.
$$
Here,
$$h^i_\ell:=\begin{cases}
h_ih_{i+1}\cdots h_{\ell-1}&\text{if $i<\ell$,}\\
h^{i-1}\cdots h^{\ell+1}h^\ell&\text{if $i>\ell$,}\\
\text{id}&\text{if $i=\ell$.}
\end{cases}
$$
Observe that $\Ker(h_\ell^i)=\Ker(h_{\ell-1})$ for $\ell> i$ and
$\Ker(h_\ell^j)=\Ker(h^\ell)$ for $\ell< j$. Thus $h^i_\ell(v)$ and
$h^j_\ell(v)$ are nonzero for $v\in
V^{(\ell)}-(\Ker(h^{\ell})\cup\Ker(h_{\ell-1}))$ and define points
in $\IP(V^{(i)})$ and $\IP(V^{(j)})$.

Notice that $\IP(\mathfrak V)$ comes with natural invertible sheaves
$\O_{\IP(\mathfrak V)}(i,j)$, obtained by restriction of the sheaves
$\O(i,j)$ on $\IP^{1,1}(\mathfrak V)$, which do not depend of the
particular $\IP^{1,1}(\mathfrak V)$ chosen. Also, if $\mathfrak W$ is a
truncation of
$\mathfrak V$, then
$\P(\mathfrak W)\subseteq\IP(\mathfrak V)$ naturally, with
$\O_{\IP(\mathfrak W)}(i,j)=\O_{\IP(\mathfrak V)}(i,j)|_{\IP(\mathfrak W)}$ for all $i,j$.

As in Section \ref{Tw}, let $X$ denote a
curve defined over an algebraically closed field
$k$ with exactly two irreducible components, $Y$
and $Z$, that are smooth and intersect transversally at a single point
$P$. 
Let $d$ and $r$ be integers. Let $J$ be the connected component
of the Picard scheme of $X$ parameterizing invertible sheaves of
degree $d$ on $Y$ and $0$ on $Z$. Recall that $T^{(i)}$ denotes the
$i$th symmetric product of a scheme $T$.

\begin{defn}\label{5.2} The \emph{degree-$d$ Abel map} $A_d\colon X^{(d)}\to J$
associates to each
Weil divisor $D$ of $X$ the invertible sheaf $\O_X(D)$ defined as that
having restrictions $\O_Y(D_1+d_2P)$ and $\O_Z(D_2-d_2P)$, where
$D=D_1+D_2$, with $D_1$ supported in $Y$ of degree $d_1$ and $D_2$
supported in $Z$ of degree $d_2$.
\end{defn}

By its very definition, $\O_X(D)$ does not depend on
the decomposition $D=D_1+D_2$ chosen; see \cite{E-Oss}, $\S3$, p.~82.

Let $I$ be an invertible sheaf on $X$ and
$(I^{(i)},\varphi^i,\varphi_i\,|\,i\in\z)$ an associated twist
$\delta$-sequence. For each $i,j$, let $\varphi_i^j$ be as in
\eqref{fij}. Let $i_0$ be the integer for which $I^{(i_0)}$ is
invertible of degree $d$ on $Y$ and $i_{\infty}$ that for which
$I^{(i_\infty)}$ is invertible of degree $d$ on $Z$. Then
$i_\infty=i_0+d\delta$. We may assume, without loss of generality, that $i_0=0$.

Let $\mathfrak g=(I;V^{(i)},\,i\in\mathbb Z)$ be a level-$\delta$
limit linear series. Let $\mathfrak V:=(V^{(i)},h^i,h_i\,|\,i\in\z)$
be the associated sequence of linked vector spaces. Then $i_0$ is at
most its lower bound and $i_\infty$ is at least its upper bound. In
particular, we may view $\mathbb P(\mathfrak V)$ inside 
$\mathbb P(V^{(0)})\times\mathbb P(V^{(d\delta)})$.

Let $h_i^j:=\varphi_i^j|_{V^{(i)}}$
for every $i$ and $j$. To each $i\in\mathbb Z$ and $s\in V^{(i)}$ such
that $0\leq i\leq d\delta$ and $s\not\in\Ker(h^i)\cup\Ker(h_{i-1})$,
we associate the
point $[D(s)]$ on $X^{(d)}$ given by
$$
D(s):=\div(h_{i}^\ell(s)|_Y)+\div(h_i^m(s)|_Z)-
\begin{cases}
P&\text{if $\ell<m$},\\
0&\text{if $\ell=m$},
\end{cases}
$$
where $\ell$ (resp.~$m$) is the maximum (resp.~minimum) integer not
greater (resp.~not smaller) than $i$ such that $I^{(\ell)}$
(resp.~$I^{(m)}$) is invertible.

Notice that $D(s)$ has degree $d$. Indeed,
$\div(h_{i}^\ell(s)|_Y)$ has degree $d-\ell/\delta$, while
$\div(h_i^m(s)|_Z)$ has degree $m/\delta$. The sum has
degree $d+(m-\ell)/\delta$, from which follows that $D(s)$ has degree
$d$. Also, $D(s)$ is effective. This is clear if $\ell=m$. On the
other hand, if $\ell<m$, then
$I^{(i)}=I^{(m)}|_Y\oplus I^{(\ell)}|_Z$, and thus both
$h_{i}^\ell(s)|_Y$ and $h_{i}^m(s)|_Z$ vanish at 
$P$. At any rate, it follows from Definition \ref{5.2} that $\O_X(D(s))$ restricts to
$$
I^{(\ell)}|_Y\ox\O_Y\big(\frac{\ell}{\delta} P\big)\text{ and
  }I^{(m)}|_Z\ox\O_Z\big(d-\frac{\ell}{\delta}P\big),
$$
which are the same restrictions as those of $I^{(0)}$. Thus
$[D(s)]\in A_d^{(-1)}([I^{(0)}])$.

(Another way of viewing $D(s)$ is by considering the dual
$(I^{(i)})^*\to\O_X$ of the homomorphism induced by $s$; the image is
the sheaf of ideals of a finite subscheme of $X$ whose 0-cycle is
$D(s)$.)

Let
$$
\IP(\mathfrak g):=\bigcup_{i=0}^{d\delta}\IP(\mathfrak g_i)\subseteq X^{(d)},\text{
  where }
\IP(\g_i):=\ol{\{[D(s)]\,|\, s\in
  V^{(i)}-(\Ker(h^i)\cup\Ker(h_{i-1}))\}}.
$$
Then $\IP(\mathfrak g)\subseteq A_d^{(-1)}([I^{(0)}])$.

Notice that
\begin{equation}\label{d2d}
\div(\varphi_{i}^{0}(s)|_Y)+\div(\varphi_i^{d\delta}(s)|_Z)=D(s)+dP=\tau_{dP}([D(s)])
\end{equation}
for each $i$ and $s$, where $\tau_{dP}\:X^{(d)}\to X^{(2d)}$ is the
embedding taking $[D]$ to $[D+dP]$. Furthermore, there are natural inclusions
$\iota_0\:\IP(V^{(0)})\hookrightarrow Y^{(d)}$ and
$\iota_{d\delta}\:\IP(V^{(d\delta)})\hookrightarrow Z^{(d)}$, the first taking $[s]$ to
$\div(s|_Y)$, the second taking $[s]$ to $\div(s|_Z)$. Composing with
the sum embedding $\sigma\:Y^{(d)}\times Z^{(d)}\to X^{(2d)}$, it
follows from \eqref{d2d} that
$$
\sigma(\iota_0\times\iota_{d\delta})(\IP(\mathfrak
V))=\tau_{dP}(\IP(\mathfrak g)),
$$
Hence, we may naturally identify $\IP(\mathfrak V)$ with
$\IP(\g)$ inside $X^{(2d)}$. Furthermore, if we let $Q_Y\in Y-P$ and
$Q_Z\in Z-P$ be any two points, and set $H^{\ell}_{i,j}:=iH^{\ell}_Y+jH^{\ell}_Z$, where
$$
H^\ell_Y:=\{[D]\in X^{(\ell)}\,|\,D\geq Q_Y\}\text{ and }
H^\ell_Z:=\{[D]\in X^{(\ell)}\,|\,D\geq Q_Z\}
$$
for all $i,j,\ell$, then $\O_{\IP(\mathfrak V)}(i,j)$ and
$\O_{X^{(d)}}(H^{d}_{i,j})|_{\IP(\g)}$ coincide, both being restrictions of
$\O_{X^{(2d)}}(H^{2d}_{i,j})$.

\begin{lem}\label{trunc} Each sequence of linked vector spaces is a truncation of
  an exact sequence.
\end{lem}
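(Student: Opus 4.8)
The plan is to prove the equivalent statement that every sequence of linked vector spaces $\mathfrak V$ admits an exact \emph{expansion}, building it one level at a time. Writing $(p_i,q_i,m_i)$ for the numerical function of $\mathfrak V$, I would use Proposition~\ref{pqm}, which gives $\sum_i m_i\leq n$ with equality exactly when $\mathfrak V$ is exact. Everything then reduces to the one-step claim: \emph{if $\mathfrak V$ is not exact, there is an elementary expansion $\mathfrak V'$ (so that $\mathfrak V$ is the elementary truncation of $\mathfrak V'$ at some $m$) with $\sum_i m_i'>\sum_i m_i$.} Granting this, since $\sum_i m_i$ is an integer in $[0,n]$ that strictly increases at each step, finitely many elementary expansions yield an exact sequence $\mathfrak V^{\mathrm{ex}}$; and since a truncation of a truncation is a truncation, $\mathfrak V$ is then a truncation of $\mathfrak V^{\mathrm{ex}}$.

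For the one-step claim I would fix a level $m$ where exactness fails, i.e.\ $\Im(h^m)\subsetneq\Ker(h_m)$, equivalently the defect $\epsilon:=p_{m+1}+q_m-n$ is positive. Setting $A:=V^{(m)}$, $C:=V^{(m+1)}$, $u:=h^m$ and $v:=h_m$, I would exploit that $uv=0=vu$: elementary linear algebra splits $(A,C,u,v)$ into one-dimensional indecomposables of three kinds --- up-isomorphisms (where $u$ is an isomorphism and $v=0$; there are $\dim\Im(u)=n-q_m$ of them), down-isomorphisms (where $v$ is an isomorphism and $u=0$; there are $\dim\Im(v)=n-p_{m+1}$), and gaps (a line in $A$ and a line in $C$ killed by both maps; there are exactly $\epsilon$). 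I would then insert one new level $B$ of dimension $n$ between $A$ and $C$, routing $A\xrightarrow{\sim}B\xrightarrow{\sim}C$ through each up-isomorphism, $C\xrightarrow{\sim}B\xrightarrow{\sim}A$ through each down-isomorphism, and, crucially, placing a \emph{peak} in each gap, meaning both maps out of $B$ are isomorphisms while both maps into $B$ vanish. By construction the resulting maps $a\colon A\to B$, $b\colon B\to A$, $c\colon B\to C$, $d\colon C\to B$ satisfy $ca=u$ and $bd=v$, so the elementary truncation of $\mathfrak V'$ at $m$ returns $\mathfrak V$.

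Next I would verify the linked-sequence axioms and the gain in $m$-mass. Condition (a) holds on each indecomposable because one of the two maps in every new step vanishes there. For condition (b) I would note $\Ker(a)=\Ker(u)$ and $\Ker(d)=\Ker(v)$, so the kernel intersections at $A$ and $C$ are unchanged, hence still zero, while at $B$ one has $\Ker(c)=$ (the $B$-parts of the down-isomorphisms) and $\Ker(b)=$ (the $B$-parts of the up-isomorphisms), which are complementary summands and so meet only in $0$. Condition (c) is untouched away from $m$. The numerical function is unchanged at $A$ and $C$ (again by $\Ker(a)=\Ker(u)$ and $\Ker(d)=\Ker(v)$), the two steps flanking $B$ are exact, and the new level contributes $m_B=n-\dim\Ker(b)-\dim\Ker(c)=n-(n-q_m)-(n-p_{m+1})=\epsilon>0$, so $\sum_i m_i'=\sum_i m_i+\epsilon$, as claimed.

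The hard part will be the two-step factorization at the non-exact level: the decomposition of $(u,v)$ into up-isomorphisms, down-isomorphisms and gaps is exactly what makes condition (b) survive at the inserted level $B$ and simultaneously forces the $m$-mass to increase by precisely the defect $\epsilon$. A naive factorization --- say collapsing a gap by letting both maps out of $B$ vanish --- already breaks (b) at $B$, so isolating the gaps and inserting one peak per gap is the essential idea; the rest is bookkeeping of kernels.
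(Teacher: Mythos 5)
Your proposal is correct and takes essentially the same route as the paper: both arguments reduce the lemma, via Proposition \ref{pqm}, to showing that a non-exact sequence admits a one-step expansion which strictly increases the quantity $\sum_i m_i\leq n$, so that finitely many expansions terminate in an exact sequence. Moreover, your inserted level $B$ coincides with the paper's: embedding $B$ into $V^{(i)}\oplus V^{(i+1)}$ by $x\mapsto(b(x),c(x))$ identifies it with the paper's subspace $W=(\Im(h_i)\oplus\Im(h^i))+K$, where $K$ is the graph of an isomorphism between complements of the images inside the kernels — your ``one peak per gap'' is exactly that graph, written in a basis adapted to the indecomposable summands.
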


\begin{proof} Let $\mathfrak V=(V^{(i)},h^i,h_i\,|\,i\in\z)$ be a
  sequence of linked vector spaces of dimension $n$. Recall the notation:
$$
p_i:=\dim\Ker(h_{i-1}),\quad q_i:=\dim\Ker(h^i)\text{ and
}
m_i:=n-p_i-q_i.
$$
Then $\sum m_i\leq n$ with equality if and only if $\mathfrak V$ is
exact, by Proposition \ref{pqm}. So, let $m(\mathfrak V):=\sum m_i$.

If $\mathfrak W$ is an expansion of $\mathfrak V$, then
$m(\mathfrak W)\geq m(\mathfrak V)$. Thus, to prove the lemma, we need
only show that, if $\mathfrak V$ is not exact, then there is an
expansion $\mathfrak W$ of $\mathfrak V$ with
$m(\mathfrak W)> m(\mathfrak V)$.

Thus, suppose $\mathfrak V$ is not exact. Let $i\in\mathbb Z$ for
which $\rk(h_i)+\rk(h^i)<n$. Let $W\subseteq V^{(i)}\oplus V^{(i+1)}$ be
a $n$-dimensional
subspace containing $\Im(h_i)\oplus\Im(h^i)$ such that the projection
maps $W\to V^{(i)}$ and $W\to V^{(i+1)}$ have images $\Ker(h^i)$
and $\Ker(h_i)$, respectively. This is possible:
$W=(\Im(h_i)\oplus\Im(h^i))+K$, where $K$ is the graph of an
isomorphism between a subspace of
$\Ker(h^i)$ complementary to $\Im(h_i)$ and a 
subspace of $\Ker(h_i)$ complementary to $\Im(h^i)$. In particular, it follows
that
$$
W\cap (V^{(i)}\oplus 0)=\Im(h_i)\oplus 0\text{ and }
W\cap (0\oplus V^{(i+1)})=0\oplus\Im(h^i).
$$
Thus, inserting $W$ between $V^{(i)}$ and $V^{(i+1)}$, we obtain a
  sequence of linked vector spaces $\mathfrak W$ expanding $\mathfrak
  V$ with
$$
m(\mathfrak W)=m(\mathfrak V)+(n-\rk(h_i)-\rk(h^i))>m(\mathfrak V).
$$
\end{proof}

\begin{prop}\label{pv}
Let $\mathfrak V:=(V^{(i)},h^i,h_i\,|\,i\in\z)$
be a sequence of linked vector
  spaces of dimension $n$. If $\mathfrak V$ is exact, then
  $\IP(\mathfrak V)$ is a connected, Cohen--Macaulay subscheme of
  $\IP^{1,1}(\mathfrak V)$ of pure dimension $n-1$ and bivariate
  Hilbert polynomial $h^0(\IP(\mathfrak V),\O_{\IP(\mathfrak V)}(i,j))=\binom{i+j+n-1}{n-1}$
  for $i,j>>0$. Conversely, if $\IP(\mathfrak
  V)$ has bivariate Hilbert polynomial $\binom{i+j+n-1}{n-1}$, then
  $\mathfrak V$ is exact.
\end{prop}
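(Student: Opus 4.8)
The plan is to treat the two implications separately, deriving the forward direction (exact $\Rightarrow$ good) from an explicit description of $\IP(\mathfrak V)$, and then deducing the converse from it by a comparison argument using Lemma~\ref{trunc}.

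First I would pin down the geometry of each stratum $\IP(\mathfrak V_\ell)$. Fix $i$ below the lower bound and $j$ above the upper bound, write $A:=h^i_\ell$ and $B:=h^j_\ell$, and recall from the text that $\Ker(A)=\Ker(h_{\ell-1})$ and $\Ker(B)=\Ker(h^\ell)$, of dimensions $p_\ell$ and $q_\ell$, meeting only in $0$ by condition (b). Choosing a decomposition $V^{(\ell)}=K_-\oplus K_+\oplus M$ with $K_-=\Ker(A)$, $K_+=\Ker(B)$ and $\dim M=m_\ell$, the rational map $[v]\mapsto([Av],[Bv])$ becomes $[v_-:v_+:v_0]\mapsto([v_+:v_0],[v_-:v_0])$ after identifying $\Im(A)\cong K_+\oplus M$ and $\Im(B)\cong K_-\oplus M$. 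From this I read off that $\IP(\mathfrak V_\ell)$ is the determinantal locus where the two $M$-components are proportional; it is irreducible and reduced of dimension $n-1$ exactly when $m_\ell\geq 1$ (the map is then generically injective, since $v_0\neq 0$ forces the fibre to be a single point), and of dimension $n-2$ when $m_\ell=0$. Using Proposition~\ref{pqm} in the exact case, one checks that the strata with $m_\ell=0$ are absorbed into their neighbours, so that $\IP(\mathfrak V)=\bigcup_{m_\ell\geq 1}\IP(\mathfrak V_\ell)$ is a union of $s$ determinantal pieces, where $s$ is the number of jumps and $\sum_{m_\ell\geq 1}m_\ell=n$.

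Next I would run an induction on $s$ (equivalently on $n$) via Mayer--Vietoris. Ordering the jumps $\ell_1<\cdots<\ell_s$, I would split off the top piece $\IP(\mathfrak V_{\ell_s})$ and identify its scheme-theoretic intersection with $\bigcup_{k<s}\IP(\mathfrak V_{\ell_k})$; the exactness relations $p_{\ell+1}=p_\ell+m_\ell$ force this intersection to be again a determinantal piece of the same shape, pure of dimension $n-2$. Granting that the scheme-theoretic union is reduced with this intersection, the short exact sequence
$$0\to\O_{\IP(\mathfrak V)}\to\O_{\IP(\mathfrak V_{\ell_s})}\oplus\O_{\bigcup_{k<s}\IP(\mathfrak V_{\ell_k})}\to\O_{\text{intersection}}\to 0,$$
twisted by $\O(i,j)$, yields both the Cohen--Macaulay property (by the standard criterion that a union of two CM schemes of pure dimension $n-1$ meeting in a CM scheme of pure dimension $n-2$ is CM) and the additivity of Hilbert polynomials; connectedness follows because consecutive pieces meet. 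The remaining computation is purely numerical: the Hilbert polynomial of a single determinantal piece, together with the telescoping of the Mayer--Vietoris contributions, must sum to $\binom{i+j+n-1}{n-1}$. This is seen most transparently by recognizing $\IP(\mathfrak V)$ as a flat degeneration of the graph of an isomorphism $\mathbb P(V^{(i)})\xrightarrow{\sim}\mathbb P(V^{(j)})$ (the single-jump case $m=n$, in which $\IP(\mathfrak V)$ is the diagonal and $\O(i,j)$ restricts to $\O(i+j)$), so that the Hilbert polynomial is forced by flatness.

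For the converse I would argue the contrapositive. By Lemma~\ref{trunc}, $\mathfrak V$ is a truncation of some exact $\mathfrak V'$, giving a closed embedding $\IP(\mathfrak V)\subseteq\IP(\mathfrak V')$ under which $\O_{\IP(\mathfrak V)}(i,j)$ is the restriction of $\O_{\IP(\mathfrak V')}(i,j)$. Suppose $\mathfrak V$ is not exact; then some index has $\rk(h^i)+\rk(h_i)<n$, and the expansion inserting a space $W$ there, as in the proof of Lemma~\ref{trunc}, produces a genuinely new top-dimensional stratum, whence $\IP(\mathfrak V)\subsetneq\IP(\mathfrak V')$ is a \emph{proper} closed subscheme. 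Since $\O(i,j)$ is ample for $i,j>0$, the ideal sheaf $\mathcal J$ of $\IP(\mathfrak V)$ in $\IP(\mathfrak V')$ is nonzero and so has sections after an ample twist, forcing the Hilbert polynomial of $\IP(\mathfrak V)$ to be strictly smaller than that of $\IP(\mathfrak V')$, which equals $\binom{i+j+n-1}{n-1}$ by the forward direction. Thus $\IP(\mathfrak V)$ cannot have the prescribed Hilbert polynomial unless $\mathfrak V$ is exact.

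The hard part will be, in the forward direction, controlling the scheme structure: verifying that the determinantal strata are reduced, that consecutive strata meet in the predicted reduced Cohen--Macaulay $(n-2)$-dimensional piece, and that the scheme-theoretic union coincides with the reduced union, so that the Mayer--Vietoris sequences are exact and Cohen--Macaulayness propagates. The flat-degeneration viewpoint sidesteps the Hilbert-polynomial bookkeeping but shifts the difficulty to constructing the degenerating family and proving Cohen--Macaulayness of the special fibre; either way, this scheme-theoretic transversality is the crux. A secondary, milder obstacle is the strictness of the inclusion $\IP(\mathfrak V)\subsetneq\IP(\mathfrak V')$ in the converse, which requires exhibiting an explicit point of the new stratum lying outside $\IP(\mathfrak V)$.
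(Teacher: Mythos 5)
Your converse argument is precisely the paper's: invoke Lemma~\ref{trunc} to realize $\mathfrak V$ as a truncation of an exact expansion $\mathfrak V'$, observe that non-exactness forces $\IP(\mathfrak V)$ to miss an entire top-dimensional component of $\IP(\mathfrak V')$, and conclude that the Hilbert polynomials must differ (the paper phrases the strictness via the jump sets, $S_{\mathfrak V}\subsetneqq S_{\mathfrak V'}$, which is the same point as your ``genuinely new stratum''; your ample-twist justification of the strict drop in the Hilbert polynomial is a detail the paper leaves implicit). The real difference is in the forward direction: the paper does not prove it at all, but cites \cite{E-Oss}, Thm.~4.3 (together with Lemmas~4.4, 4.8 and Rmk.~4.9 there), asserting the proof carries over step-by-step. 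What you sketch --- the decomposition \eqref{irredPV} into determinantal strata indexed by the indices with $m_\ell>0$, each irreducible of dimension $n-1$, glued along dimension-$(n-2)$ loci, with connectedness, Cohen--Macaulayness and the Hilbert polynomial extracted from a Mayer--Vietoris induction --- is essentially a reconstruction of that cited argument. Your route buys self-containedness, but at the price of exactly the verifications you flag as ``granted'': reducedness of the strata, identification of the scheme-theoretic (not merely set-theoretic) intersections, and exactness of the Mayer--Vietoris sequences. These are not formalities; they are the entire content of the citation, so as written your forward direction is a plan rather than a proof.

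One concrete warning about your proposed shortcut: computing the Hilbert polynomial by ``recognizing $\IP(\mathfrak V)$ as a flat degeneration of the diagonal'' is circular within the logic of this paper. The statement that $\IP(\mathfrak g)$ is the flat limit of the schemes $\IP(V)$ coming from a smoothing is Theorem~\ref{thm:flat-limit}, and its proof runs in the opposite direction: it first uses Proposition~\ref{pv} to know that $\IP(\mathfrak g)$ already has the Hilbert polynomial $\binom{i+j+n-1}{n-1}$, then shows the flat limit contains $\IP(\mathfrak g)$, and concludes equality. To use flatness instead as an input, you would need to construct a degenerating family abstractly and prove that its special fibre equals $\IP(\mathfrak V)$ scheme-theoretically, which is precisely the bookkeeping you hoped to sidestep. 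So the Mayer--Vietoris telescoping (or a direct appeal to \cite{E-Oss} as the paper does) is the right way to finish, and it needs to be carried out, not granted.
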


\begin{proof} The proof of the first statement follows step-by-step
the same proof given to \cite{E-Oss}, Thm.~4.3. p.~84. 
In particular, it follows from that proof that, if $\mathfrak V$ is exact, then
\begin{equation}\label{irredPV}
\IP(\mathfrak V)=\bigcup_{i\in S_{\mathfrak V}}\IP(\mathfrak V_i),
\end{equation}
where
$$
S_{\mathfrak
  V}:=\{i\in\z\,|\,V^{(i)}\neq\Ker(h^i)\oplus\Ker(h_{i-1})\}=
\{i\in\z\,|\, m_i>0\};
$$
see
\cite{E-Oss}, Rmk.~4.9, p.~89. Furthermore, it follows from Lemmas 4.4 and 4.8
in {\it loc.~cit.} that the $\IP(\mathfrak V_i)$ are irreducible of
dimension $n-1$ if $i\in S_{\mathfrak V}$. In other words,
\eqref{irredPV} is the expression of $\IP(\mathfrak V)$ as the union
of its irreducible components; there are $|S_{\mathfrak V}|$ of
them.

Conversely, let $\mathfrak W$ be an exact sequence of linked vector
spaces
expanding $\mathfrak V$. Then $\IP(\mathfrak V)\subseteq\IP(\mathfrak
W)$. Furthermore, if $\mathfrak V$ is not exact then
$S_{\mathfrak V} \subsetneqq S_{\mathfrak W}$, and thus there
is $i\in\z$ such that $\IP(\mathfrak W_i)\not\subseteq\IP( \mathfrak
V)$. So the bivariate Hilbert polynomial of $\IP(\mathfrak V)$ must be
different
from that of $\IP(\mathfrak W)$, thus different from
$\binom{i+j+n-1}{n-1}$ by the first statement of the proposition.
\end{proof}

\begin{thm}\label{thm:1} Let $r,d,\delta,\delta'$ be nonnegative
  integers, with $\delta|\delta'$.
\begin{enumerate}
\item For each $\mathfrak{g}\in G^{r}_{d,\delta}(X)$, if $\mathfrak g$
  is exact then the
  subscheme $\p(\mathfrak{g})\subseteq X^{(d)}$ is connected,
  Cohen--Macaulay, of pure dimension $r$ and bivariate Hilbert polynomial
$\binom{i+j+r}{r}$. Conversely, if $\IP(\g)$ has bivariate Hilbert polynomial
$\binom{i+j+r}{r}$, then $\g$ is exact.
\item For each $\mathfrak{g'}\in G^{r}_{d,\delta'}(X)$, letting
$\mathfrak g:=\rho_{\delta',\delta}(\mathfrak{g'})$, we have
$\p(\mathfrak{g})\subseteq\p(\mathfrak{g}')$. If $\mathfrak g$ is
exact, then so is $\mathfrak g'$, and equality holds. Conversely, if
equality holds and $\mathfrak g'$ is exact, then so is $\g$.
\end{enumerate}
\end{thm}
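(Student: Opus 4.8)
The plan is to transport the whole statement to the language of sequences of linked vector spaces, where Proposition~\ref{pv} already contains the geometric content, and then to read off the comparison between $\mathfrak g$ and $\mathfrak g'$ as a truncation.

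For part~(1), I would let $\mathfrak V=(V^{(i)},h^i,h_i\,|\,i\in\z)$ be the sequence of linked vector spaces attached to $\mathfrak g$; it has dimension $n=r+1$. The discussion preceding Lemma~\ref{trunc} already identifies $\p(\mathfrak g)$ with $\IP(\mathfrak V)$ (inside $X^{(2d)}$ via $\tau_{dP}$) and carries $\O_{X^{(d)}}(H^{d}_{i,j})|_{\p(\mathfrak g)}$ to $\O_{\IP(\mathfrak V)}(i,j)$, so the two have the same bivariate Hilbert polynomial. Since $\mathfrak g$ is exact exactly when $\mathfrak V$ is (matching Definition~\ref{def:1} with exactness of a linked sequence), both halves of part~(1) are immediate from Proposition~\ref{pv} with $n-1=r$.

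For part~(2), the first and only substantial step is to check that the linked sequence $\mathfrak V$ of $\mathfrak g=\rho_{\delta',\delta}(\mathfrak g')$ is a truncation of the linked sequence $\mathfrak V'$ of $\mathfrak g'$. Writing $c:=\delta'/\delta$, the series $\mathfrak g$ keeps the terms $V^{(ci)}$ with maps the $c$-fold compositions $\varphi^{c(i+1)}_{ci}$ and $\varphi^{ci}_{c(i+1)}$ of \eqref{fij}; deleting the intermediate terms $V^{(j)}$ with $c\nmid j$ one at a time is precisely a finite chain of elementary truncations, the composed maps in parts~(b) and~(c) of that definition matching \eqref{fij} after the reindexing $ci\mapsto i$. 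Granting this, the paragraph before Lemma~\ref{trunc} supplies a natural inclusion $\IP(\mathfrak V)\subseteq\IP(\mathfrak V')$ with compatible $\O(i,j)$; because the two sequences share the endpoints $V^{(0)}$ and $V^{(d\delta')}$ they sit in a common $\IP^{1,1}$, and the identifications of the previous paragraph turn this into the desired containment $\p(\mathfrak g)\subseteq\p(\mathfrak g')$.

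The exact cases then fall out. If $\mathfrak g$ is exact then $m(\mathfrak V)=n$ by Proposition~\ref{pqm}; since $\mathfrak V'$ is an expansion of $\mathfrak V$, the inequality $m(\mathfrak V')\geq m(\mathfrak V)$ from the proof of Lemma~\ref{trunc} combined with $m(\mathfrak V')\leq n$ forces $m(\mathfrak V')=n$, so $\mathfrak g'$ is exact as well. By part~(1) both $\p(\mathfrak g)$ and $\p(\mathfrak g')$ then carry bivariate Hilbert polynomial $\binom{i+j+r}{r}$, so the kernel of the surjection $\O_{\p(\mathfrak g')}\to\O_{\p(\mathfrak g)}$ attached to the containment has vanishing bivariate Hilbert polynomial and hence is zero, giving $\p(\mathfrak g)=\p(\mathfrak g')$. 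Conversely, if $\p(\mathfrak g)=\p(\mathfrak g')$ and $\mathfrak g'$ is exact, then $\p(\mathfrak g)$ inherits the Hilbert polynomial $\binom{i+j+r}{r}$ from part~(1), and the converse half of part~(1) yields exactness of $\mathfrak g$. The hard part is the truncation identification opening this second stage; once it is nailed down, the remainder is just an assembly of Propositions~\ref{pv} and~\ref{pqm} with a standard Hilbert-polynomial comparison.
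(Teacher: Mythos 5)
Your proposal is correct, and its skeleton is the paper's own: part~(1) is exactly Proposition~\ref{pv} combined with the identification of $\p(\g)$ with $\IP(\mathfrak V)$ (and of the sheaves $\O(i,j)$) from the discussion preceding Lemma~\ref{trunc}, and the containment in part~(2) comes from recognizing the linked sequence of $\g$ as a truncation of that of $\g'$, which is precisely the fact the paper invokes when it says $\mathfrak V'$ is an expansion of $\mathfrak V$. The one place you genuinely diverge is the implication that exactness of $\g$ forces exactness of $\g'$: the paper deduces this from Proposition~\ref{Cor1}, i.e.\ from the moduli-theoretic analysis of the forgetful maps (Theorem~\ref{p2} and Lemma~\ref{lfff}), whereas you obtain it by pure linked-sequence arithmetic, namely the monotonicity $m(\mathfrak V')\geq m(\mathfrak V)$ under expansion (from the proof of Lemma~\ref{trunc}) together with the bound $m(\mathfrak V')\leq r+1$ and the characterization of exactness in Proposition~\ref{pqm}(f). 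Your route is more elementary and self-contained within the Abel-map section, while the paper's route reuses the global structure theory of $\rho_{\delta',\delta}$ already in place; both are valid, and both then conclude scheme-theoretic equality by the same Hilbert-polynomial comparison, which you spell out (the kernel of $\O_{\p(\g')}\to\O_{\p(\g)}$ has zero bivariate Hilbert polynomial, hence vanishes) and the paper leaves implicit. One small caution on your truncation step: literally there are infinitely many indices $j$ with $c\nmid j$, so "a finite chain of elementary truncations" only accounts for the finitely many such $j$ in the essential range $0<j<d\delta'$; outside that range the maps $h_i$, respectively $h^i$, are isomorphisms, so the discarded terms change nothing up to the natural identifications under which $\IP^{1,1}$ and $\IP(\mathfrak V)$ are defined. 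This is the same level of care the paper itself takes, but it is worth a sentence if you write this up.
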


\begin{proof} Statement 1 follows from Proposition \ref{pv},
  since $\IP(\mathfrak V)$ and $\IP(\g)$ coincide, and $\mathfrak V$
  is exact if and only if $\g$ is, where $\mathfrak V$ is the sequence
  of linked vector spaces associated to $\g$.

The first assertion of the second statement follows
from the fact that $\mathfrak V'$ is
an expansion of $\mathfrak V$, where $\mathfrak V'$ is the sequence
  of linked vector spaces associated to $\g'$. The remaining
  assertions follow from Proposition~\ref{Cor1} and Statement~1.
\end{proof}

\begin{thm}\label{thm:flat-limit} Let $\pi\:\X\to B$ be a smoothing of $X$
  with singularity degree $\delta$. Let $\I$ be a torsion-free, rank-1
  sheaf on $\X/B$ and $V\subseteq\Gamma(X_\eta,I_\eta)$ a vector
  subspace. Let $\mathfrak g$ be the level-$\delta$ limit linear
  series on $X$ that arises from $\mathfrak g_\eta:=(I_\eta,V)$.
Then $\IP(V)$, viewed as a
subscheme of the fiber over $\eta$ of the
relative symmetric product $\X^{(d)}_B$, has
closure intersecting $X^{(d)}$ in
$\IP(\mathfrak g)$.
\end{thm}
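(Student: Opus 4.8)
The plan is to exhibit $\IP(\mathfrak g)$ as the special fibre of the schematic closure $\overline{\IP(V)}\subseteq\X^{(d)}_B$ and to pin it down by a Hilbert–polynomial comparison. Write $F:=\overline{\IP(V)}\cap X^{(d)}$ for this special fibre. Since $B=\mathrm{Spec}\,k[[t]]$ is a trait and $\overline{\IP(V)}$ is the schematic closure of a subscheme of the generic fibre, it is $t$-torsion free, hence flat over $B$; thus $t$ is a nonzerodivisor on it, $F$ is the Cartier divisor it cuts out, and $F$ is of pure dimension $r$ with the same bivariate Hilbert polynomial as the generic fibre $\IP(V)$. I would reduce the theorem to two assertions: that $\IP(V)$ and $\IP(\mathfrak g)$ have the same bivariate Hilbert polynomial, and that $F\subseteq\IP(\mathfrak g)$ as closed subschemes of $X^{(d)}$. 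Granting both, the surjection $\O_{\IP(\mathfrak g)}\twoheadrightarrow\O_F$ has kernel of identically vanishing Hilbert polynomial, hence is an isomorphism, and $F=\IP(\mathfrak g)$.

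For the Hilbert polynomials, first note that the divisor map $[s]\mapsto\div(s)$ embeds $\IP(V)\cong\IP^r$ into $X_\eta^{(d)}$, since on the smooth connected curve $X_\eta$ two sections with the same divisor are proportional. Choosing the auxiliary points $Q_Y,Q_Z$ away from the base locus of $V$, the divisors $H_Y,H_Z$ restrict to the hyperplane class on $\IP^r$, so $\O(H^d_{i,j})$ restricts to $\O_{\IP^r}(i+j)$ and $\IP(V)$ has bivariate Hilbert polynomial $\binom{i+j+r}{r}$; extending $Q_Y,Q_Z$ to sections of $\pi$ through these smooth points makes the polarizations relatively ample over $B$, whence by flatness $F$ has the same polynomial. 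On the other hand, $\mathfrak g$ arises from a genuine smoothing, so the compatibilities \eqref{compat} yield not only the inclusions but also the kernel equalities of Definition \ref{def:1}; thus $\mathfrak g$ is exact, and Theorem \ref{thm:1}(1) gives that $\IP(\mathfrak g)$ is connected and Cohen--Macaulay of pure dimension $r$, with the very same bivariate Hilbert polynomial $\binom{i+j+r}{r}$. This settles the first assertion.

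For the inclusion $F\subseteq\IP(\mathfrak g)$ I would build a relative incidence scheme over $B$. For a section $\tilde s\in\Gamma(B,\V^{(i)})$ not vanishing along a component of $X$, let $Z(\tilde s)\subseteq\X$ be the subscheme cut out by the image of the dual map $(\I^{(i)})^*\to\O_\X$ induced by $\tilde s$, the relative analogue of the description of $D(s)$ recorded after Definition \ref{5.2}. This should be finite flat of degree $d$ over $B$, with generic fibre $\div(\tilde s|_{X_\eta})\in\IP(V)$ and—this is the key local claim—special fibre the length-$d$ divisor $D(\tilde s|_X)$. Letting $\tilde s$ vary through the relative projective bundles attached to the $\V^{(i)}$ for $0\le i\le d\delta$ and taking closures then produces a closed subscheme $\mathcal D\subseteq\X^{(d)}_B$ whose generic fibre is $\IP(V)$ and whose special fibre, by the fibrewise and flat nature of the construction, is $\IP(\mathfrak g)=\bigcup_{i=0}^{d\delta}\IP(\mathfrak g_i)$. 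Since $\mathcal D$ contains $\IP(V)$ it contains its schematic closure, so $F=\overline{\IP(V)}\cap X^{(d)}\subseteq\mathcal D\cap X^{(d)}=\IP(\mathfrak g)$ scheme-theoretically, which together with the equality of Hilbert polynomials forces $F=\IP(\mathfrak g)$.

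The crux, and the expected main difficulty, is the local claim that the special fibre of $Z(\tilde s)$ equals $D(\tilde s|_X)$, together with the control of the node correction it entails. When $\delta\mid i$ the sheaf $\I^{(i)}|_X$ is invertible, $Z(\tilde s)$ is a genuine relative Cartier divisor, and its limit is simply $\div_X(\tilde s|_X)=D(\tilde s|_X)$ with $\ell=m$ and no correction. The difficulty is concentrated in the range $\delta\nmid i$, where $\tilde s|_X$ has both components vanishing at $P$ and one must follow how the points of $\div(\tilde s|_{X_\eta})$ collide through the $A_{\delta-1}$ singularity, producing exactly the term $-P$ present when $\ell<m$. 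I would carry this out in the explicit local ring $A=k[[t,y,z]]/(yz-t^\delta)$ of \eqref{lociso}, computing the flat limit of $Z(\tilde s)$ directly, the bookkeeping being dictated by the twist isomorphisms \eqref{propcel4} and Proposition \ref{Rem3.1}; this is the level-$\delta$ generalization of the regular–total–space computation in \cite{E-Oss}, Thm.~5.2. An alternative that replaces the singular local analysis by a combinatorial one is to pass to the semistable model $\wt\X$ through $\I=\psi_*\L$ (Proposition \ref{LI}), where every divisor is Cartier and the flat limit is transparent, and then push forward along $\psi$; its cost is the bookkeeping over the chain $\wt X$ of more than two components, precisely the complication the rest of the paper is arranged to avoid.
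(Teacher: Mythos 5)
Your reduction (exactness of $\mathfrak g$, equality of bivariate Hilbert polynomials, plus one containment between the flat limit $F:=\ol{\IP(V)}\cap X^{(d)}$ and $\IP(\mathfrak g)$) is exactly the paper's architecture, and your treatment of exactness and of the Hilbert polynomials is sound. The gap is in the containment step, and it is a genuine one: you claim the direction $F\subseteq\IP(\mathfrak g)$, justified by asserting that your incidence scheme $\mathcal D$ has special fibre $\IP(\mathfrak g)$ ``by the fibrewise and flat nature of the construction.'' But $\mathcal D$ is a union of closures of images of the open sets $U_i:=\IP(\V^{(i)})-\bigl(\IP(\Ker(h^i))\cup\IP(\Ker(h_{i-1}))\bigr)$, and each $U_i$ is irreducible with its generic-fibre part $U_i|_\eta=\IP(V)$ dense in it; hence each of those closures equals $\ol{\IP(V)}$, so $\mathcal D=\ol{\IP(V)}$ and $\mathcal D\cap X^{(d)}=F$. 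Thus the assertion ``$\mathcal D\cap X^{(d)}=\IP(\mathfrak g)$'' is word-for-word the theorem you are trying to prove: taking closures of images of non-proper schemes gives no control whatsoever on which \emph{extra} limit points appear in the special fibre, and ruling such points out is precisely what the Hilbert-polynomial input is for. The step is circular.

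The repair is to run the containment the other way, which is what the paper does and what your construction actually delivers. Since $\IP(\V^{(i)})\cong\IP^r_B$ is smooth over $B$, every $k$-point $[s]$ of the special fibre of $U_i$ lies on a section of $U_i\to B$; composing with the map $U_i\lra\X^{(d)}_B$ defined by the finite flat degree-$d$ family $F_i$ (your relative $Z(\tilde s)$) gives a section of $\X^{(d)}_B\to B$ through $[D(s)]$ whose generic point lies in $\IP(V)$. Hence $[D(s)]\in\ol{\IP(V)}$ for every $i$ and every admissible $s$, i.e.\ $\IP(\mathfrak g)\subseteq F$; then the surjection $\O_F\twoheadrightarrow\O_{\IP(\mathfrak g)}$ has kernel with vanishing bivariate Hilbert polynomial, hence is an isomorphism, and $F=\IP(\mathfrak g)$. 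Note also that with this orientation your announced ``crux'' --- the explicit flat-limit computation in $k[[t,y,z]]/(yz-t^\delta)$ tracking the $-P$ correction when $\delta\nmid i$ --- is not needed: the fibrewise description of $F_i$, correction included, comes for free from the dual homomorphism $(I^{(i)})^*\to\O_X$ (the parenthetical remark after Definition \ref{5.2}) together with flatness of $F_i$ over $U_i$, which is how the paper (following \cite{E-Oss}, Thm.~5.2) packages it.
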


\begin{proof} As in the proof to \cite{E-Oss}, Thm.~5.2, p.~90,
since $\g$ is exact, and thus $\IP(\mathfrak g)$ has the bivariate
Hilbert polynomial of the limit of $\IP(V)$, we need only show that
the
closure of $\IP(V)$ contains $\IP(\mathfrak g)$.

Recall the notation used in Section \ref{lls3}. Let $\I^{(i)}$ denote
the twists of $\I$, and for each $i\in\mathbb Z$, let
$\V^{(i)}$ be the subsheaf of $\pi_*\I^{(i)}$ consisting of the
sections that restrict to sections in $V$ on the generic
fiber. For each $i\in\mathbb Z$, consider on the product
$\X\times_B\IP(\V^{(i)})$ the
composition
$$
\O_{\IP(\V^{(i)})}(-1) \lra \wt\V^{(i)} \lra \I^{(i)}
$$
where the first map is the tautological map of $\IP(\V^{(i)})$ and the
second is the evaluation map, all sheaves
and maps
being viewed on the product
under the appropriate pullbacks. Taking its dual and twisting by
$\O_{\IP(\V^{(i)})}(-1)$, we obtain a map to $\O_{\IP(\V^{(i)})}$
whose image is the sheaf of ideals of a subscheme
$F_i\subseteq\X\times_B\IP(\V^{(i)})$. Moreover, $F_i$ is a flat
subscheme of relative length $d$ over
$\IP(\V^{(i)})-(\IP(\Ker(h^i))\cup\IP(\Ker(h_{i-1})))$, where
$(V^{(i)},h^i,h_i\,|\,i\in\mathbb Z)$ is the sequence of linked
vector spaces associated to $\g$. Thus we obtain a map
$$
\IP(\V^{(i)})-(\IP(\Ker(h^i))\cup\IP(\Ker(h_{i-1})))\lra \X^{(d)}_B
$$
whose image contains $\IP(V)$ and all points of $X^{(d)}$ of the
form $\div(s|_Y)+\div(s|_Z)$ for $s\in V^{(i)}-(\Ker(h^i)\cup\Ker(h_{i-1})$.
Since $\IP(\V^{(i)})$ is flat over $B$, it follows that the closure
of $\IP(V)$ in $\X^{(d)}_B$ contains all points of the above form.
As we let $i$ vary, we get that the closure of $\IP(V)$ contains
$\IP(\mathfrak g)$.
\end{proof}

\bibliographystyle{plain}

\vskip0.5cm 

{\smallsc Instituto Nacional de Matem\'atica Pura e Aplicada, 
Estrada Dona Castorina 110, 22460--320 Rio de Janeiro RJ, Brazil}

{\smallsl E-mail address: \small\verb?esteves@impa.br?}

\vskip0.5cm

{\smallsc Universidade Federal Fluminense, Instituto de Matem\'{a}tica
  e Estat\'\i stica,
Rua M\'{a}rio Santos Braga, s/n, 24020--140 Niter\'{o}i RJ, 
Brazil}

{\smallsl E-mail address: \small\verb?antonio.nigro@gmail.com?}

\vskip0.5cm

{\smallsc Universidad de Antioquia, Instituto de Matem\'aticas, Calle
  67 No 53-108, Medell\'\i n, Colombia}

{\smallsl E-mail address: \small\verb?pedro.hernandez@udea.edu.co?}

\end{document}